\tikzset{snake it/.style={decorate, decoration=snake}}
\theoremstyle{plain}
\newtheorem{thm}{Theorem}[section]
\newtheorem{cor}[thm]{Corollary}
\newtheorem{lem}[thm]{Lemma}
\newtheorem{prop}[thm]{Proposition}
\theoremstyle{definition}
\theoremstyle{remark}
\newtheorem{rmk}[thm]{Remark}
\newcommand{\BC}{{\mathbb{C}}}
\newcommand{\BG}{{\mathbb{G}}}
\newcommand{\BN}{{\mathbb{N}}}
\newcommand{\BP}{{\mathbb{P}}}
\newcommand{\BZ}{{\mathbb{Z}}}
\newcommand{\CC}{{\mathcal C}}
\newcommand{\CE}{{\mathcal E}}
\newcommand{\CF}{{\mathcal F}}
\newcommand{\CH}{{\mathcal H}}
\newcommand{\CK}{{\mathcal K}}
\newcommand{\CL}{{\mathcal L}}
\newcommand{\CM}{{\mathcal M}}
\newcommand{\CO}{{\mathcal O}}
\newcommand{\CP}{{\mathcal P}}
\newcommand{\CS}{{\mathcal S}}
\newcommand{\CW}{{\mathcal W}}
\newcommand{\CX}{{\mathcal X}}
\newcommand{\CZ}{{\mathcal Z}}
\newcommand{\BBC}{{\underline{\BC}}}
\DeclareFontFamily{OT1}{rsfs}{}
\DeclareFontShape{OT1}{rsfs}{n}{it}{<-> rsfs10}{}
\DeclareMathAlphabet{\curly}{OT1}{rsfs}{n}{it}
\newcommand{\Coh}{\mathrm{Coh}}
\begin{document}
\title[On the intersection cohomology of the moduli of $\mathrm{SL}_n$- Higgs bundles]{On the intersection cohomology of the moduli of $\mathrm{SL}_n$-Higgs bundles on a curve}
\date{\today}

\author[D. Maulik]{Davesh Maulik}
\address{Massachusetts Institute of Technology}
\email{maulik@mit.edu}

\author[J. Shen]{Junliang Shen}
\address{Massachusetts Institute of Technology}
\email{jlshen@mit.edu}
\address{Yale University}
\email{junliang.shen@yale.edu}

\begin{abstract}
We explore the cohomological structure for the (possibly singular) moduli of $\mathrm{SL}_n$-Higgs bundles for arbitrary degree on a genus $g$ curve with respect to an effective divisor of degree $>2g-2$. We prove a support theorem for the $\mathrm{SL}_n$-Hitchin fibration extending de Cataldo's support theorem in the nonsingular case, and a version of the Hausel--Thaddeus topological mirror symmetry conjecture for intersection cohomology. This implies a generalization of the Harder--Narasimhan theorem concerning semistable vector bundles for any degree.

Our main tool is an Ng\^{o}--type support inequality established recently which works for possibly singular ambient spaces and intersection cohomology complexes.

\end{abstract}

\maketitle

\setcounter{tocdepth}{1} 

\tableofcontents
\setcounter{section}{-1}

\section{Introduction}

\subsection{Overview}
Throughout, we work over the complex numbers $\BC$. Let $C$ be a nonsingular irreducible projective curve of genus $g \geq 2$. The purpose of this paper is to explore cohomological structures for the moduli space of degree $d$ semistable $\mathrm{SL}_n$-Higgs bundles on $C$ with respect to an effective divisor $D$ of degree $\mathrm{deg}(D)>2g-2$. More precisely, we show that the support theorem \cite{dC_SL} and the topological mirror symmetry conjecture \cite{HT, GWZ, MS}, which were proven in the case $\mathrm{gcd}(n,d)=1$, actually hold for \emph{arbitrary} $d$.

For this more general setting, the essential difference with the coprime case is that the moduli space may be singular due to the presence of strictly semistable locus. Hence it is natural for us to consider intersection cohomology. Our main tool is an Ng\^o--type support inequality for weak abelian fibrations recently established in \cite{MS2} which works for singular ambient spaces and intersection cohomology complexes. 

As an immediate application of our results, we also give a proof of a generalized version of the Harder--Narasimhan theorem \cite{HN} for intersection cohomology and arbitrary degree.

\subsection{Moduli of $\mathrm{SL}_n$-Higgs bundles}
We fix $D$ to be an effective divisor of degree $\mathrm{deg}(D)>2g-2$ and we fix $L \in \mathrm{Pic}^d(C)$ to be a degree $d$ line bundle on $C$. We denote by $M_{n,L}$ the moduli space of semistable Higgs bundles 
\[
(\CE, \theta): \quad \theta: \CE \to \CE\otimes \CO_C(D), \quad \mathrm{rank}(\CE)=n, \quad \mathrm{det}(\CE)\simeq L, \quad \mathrm{trace}(\theta) = 0,
\]
where the (semi-)stability is with respect to the slope $\mu(\CE,\theta) = \mathrm{deg}(\CE)/\mathrm{rank}(\CE)$. The moduli space $M_{n,L}$ admits a proper surjective morphism 
\begin{equation}\label{Hitchin}
h: M_{n,L} \to A = \bigoplus_{i= 2}^n H^0(C, \CO_C(iD)), \quad (\CE ,\theta) \mapsto \mathrm{char}(\theta)
\end{equation}
known as the Hitchin fibration \cite{Hit, Hit1}. Here $\mathrm{char}(\theta)$ denotes the characteristic polynomial of the Higgs field $\theta: \CE \to \CE \otimes \CO_C(D)$:
\[
\mathrm{char}(\theta) = (a_2, a_3, \dots, a_n), \quad a_i = \mathrm{trace}(\wedge^i\theta) \in H^0(C, \CO_C(iD)).
\]
Alternatively, we may view a closed point $a\in A$ as a spectral curve $C_a \subset \mathrm{Tot}_C(\CO_C(D))$ which is a degree $n$ cover over the zero section $C$. Let the elliptic locus $A^{\mathrm{ell}} \subset A$ be the open subset consisting of \emph{integral} spectral curves. The fibers of the restricted Hitchin fibration over $A^{\mathrm{ell}}$
\begin{equation}\label{eqn8}
h^{\mathrm{ell}}: M_{n,L}^{\mathrm{ell}} \rightarrow A^{\mathrm{ell}}
\end{equation}
are compactified Prym varieties of the integral spectral curves $C_a$. In particular, the open subvariety $M_{n,L}^{\mathrm{ell}}$ is nonsingular and contained in the stable locus $M_{n,L}^s$:
\[
M_{n,L}^{\mathrm{ell}} \subset M_{n,L}^s \subset M_{n,L}.
\]

\subsection{Support theorem for $\mathrm{SL}_n$}
By \cite{BBD}, we have the decomposition for the direct image complex of the intersection cohomology complex
\[
Rh_* \mathrm{IC}_{M_{n,L}} \simeq \bigoplus_{\alpha,i} \mathrm{IC}_{Z_{\alpha,i}}(\CL_{\alpha,i})[-r_i] \in D^b_c(A), \quad r_i \in \BZ
\]
into (shifted) simple perverse sheaves. Here $D^b_c(-)$ denotes the bounded derived category of constructible sheaves, $Z_{\alpha,i} \subset A$ are irreducible closed subvarieties, each $\CL_{\alpha,i}$ is a simple local system on an open subset of $Z_{\alpha.i}$, and $\mathrm{IC}_{Z_{\alpha,i}}(\CL_{\alpha,i})$ is the intermediate extension of $\CL_{\alpha,i}$ in $Z_{\alpha,i}$. We call $Z_{\alpha,i}$ the \emph{supports} of the direct image complex $Rh_*\mathrm{IC}_{M_{n,L}}$ that are important invariants for the map $h: M_{n,L} \to A$.

The following theorem, which generalizes de Cataldo's $\mathrm{SL}_n$-support theorem \cite{dC_SL} in the case of $\mathrm{gcd}(n,d)=1$, shows that the decomposition theorem of the Hitchin fibration $h: M_{n,L} \to A$ is governed by the elliptic locus (\ref{eqn8}).

\begin{thm}[Support theorem]\label{thm0.1} Assume that $M_{n,L}$ is the moduli space associated with an effective divisor $D$ with $\mathrm{deg}(D)>2g-2$. The generic point of any support of ${Rh}_* \mathrm{IC}_{M_{n,L}}$ lies in the elliptic locus $A^{\mathrm{ell}}$.
\end{thm}

In fact, by combining the techniques of \cite{CL, dC_SL} and \cite{MS2}, we prove in Sections \ref{sec1} and \ref{Sec2} a more general support theorem (Theorem \ref{thm1.1}) for certain relative moduli space of Higgs bundles associated with a cyclic \'etale Galois cover $\pi: C' \to C$. These moduli spaces are tightly connected to the endoscopic theory for $\mathrm{SL}_n$ \cite{Ngo0,Ngo} and the topological mirror symmetry for Hitchin systems \cite{HT, GWZ, MS}.

\subsection{Topological mirror symmetry}
Motivated by the Strominger--Yau--Zaslow mirror symmetry, Hausel--Thaddeus \cite{HT} conjectured that the moduli of semistable $\mathrm{SL}_n$- and $\mathrm{PGL}_n$-Higgs bundles should have identical (properly interpreted) Hodge numbers. In the case of $\mathrm{gcd}(n,d)=1$, the match of the Hodge numbers for the $\mathrm{SL}_n$- and $\mathrm{PGL}_n$-Higgs moduli spaces was formulated precisely in \cite{HT} using singular cohomology, and was proven recently in \cite{GWZ, LW, MS} by different methods. From the viewpoint of $S$-duality \cite[Section 5.4]{Survey} and the approach of \cite{MS}, the Hausel-Thaddeus conjecture is closely connected to the endoscopy theory and the fundamental lemma for $\mathrm{SL}_n$.

In this paper, we explore the Hausel--Thaddeus conjecture for arbitrary degree $d$. Under the assumption that $\mathrm{deg}(D)$ is even and greater than $2g-2$, we prove that an analog of the Hausel--Thaddeus conjecture holds for intersection cohomology and arbitrary degree $d$. Our approach follows the spirit of \cite{MS}, that we view the (refined) Hausel--Thaddeus conjecture \cite[Conjeture 4.5]{Survey} as an extension of Ng\^o's geometric stabilization theorem \cite{Ngo} in his proof of the fundamental lemma of the Langlands program. Our new input is the support theorem for $\mathrm{SL}_n$ and its endoscopic groups (see Theorem \ref{thm1.1}), relying on the framework of \cite{MS2}. 

Now in the following we introduce some notation and state the main theorem.

Let $\Gamma = \mathrm{Pic}^0(C)[n]$ be the group of $n$-torsion line bundles on $C$. The finite group $\Gamma$ admits a non-degenerate Weil pairing \cite[Section 1.3]{MS}, which after identifying $\Gamma$ with $H_1(C, \BZ/n\BZ)$, coincides with the intersection pairing. Hence we obtain a canonical isomorphism between $\Gamma$ and the group of characters $\Gamma = \mathrm{Hom}(\Gamma, \BG_m)$:
\begin{equation}\label{Weil_Pairing}
    \Gamma = \hat{\Gamma}.
\end{equation}

For the $\mathrm{SL}_n$-Higgs moduli space $M_{n,L}$ associated with the line bundle $L$, the corresponding $\mathrm{PGL}_n$-Higgs moduli space $[M_{n,L}/\Gamma]$ is a Deligne--Mumford stack obtained as the quotient of the natural finite group action of $\Gamma = \mathrm{Pic}^0(C)[n]$ on $M_{n,L}$:
\[
\CL \cdot (\CE, \theta) = (\CE\otimes \CL, \theta), \quad \quad \CL \in \Gamma,\quad (\CE, \theta) \in M_{n,L}.
\]
Note that when $\mathrm{gcd}(n,d) \neq 1$, both the $\mathrm{SL}_n$- and the $\mathrm{PGL}_n$-Higgs moduli spaces are singular as a variety and a Deligne--Mumford stack respectively. For an element $\gamma \in \Gamma$, we denote by $M^\gamma_{n,L} \subset M_{n,L}$ the subvariety of the $\gamma$-fixed locus. Assume that
\[
h_\gamma: M^\gamma_{n,L} \to A_\gamma : = \mathrm{Im}(h_\gamma) \subset A
\]
is the morphism induced by the Hitchin fibration (\ref{Hitchin}), which recovers $h$ when $\gamma = 0$. We denote by $i_\gamma: A_\gamma \hookrightarrow A$ the closed embedding and $d_\gamma$ the codimension of $A_\gamma$ in $A$. The $\Gamma$-action on $M_{n,L}$ induces a $\Gamma$-action on the fixed locus $M^\gamma_{n,L}$.  This action is fiberwise with respect to the morphism $h_\gamma$, which induces a canonical decomposition 
\[
{Rh_\gamma}_* \mathrm{IC}_{M^\gamma_{n,L}}  = \bigoplus_{\kappa} \left({Rh_\gamma}_* \mathrm{IC}_{M^\gamma_{n,L}}\right)_\kappa \in D^b_c(A_\gamma), \quad \kappa \in \hat{\Gamma}
\]
into eigen-subcomplexes \cite[Lemma 3.2.5]{NL}. The following theorem is a sheaf-theoretic version of the Hausel-Thaddeus conjecture for the divisor $D$, which resembles the fundamental lemma.

\begin{thm}\label{thm0.2}
Assume that $M_{n,L}$ is the moduli space associated with an effective divisor $D$ with $\mathrm{deg}(D)$ even and greater than $2g-2$. Assume that $\gamma \in \Gamma$ and $\kappa \in \hat{\Gamma}$ are matched via the Weil pairing (\ref{Weil_Pairing}). 
\begin{enumerate}
    \item[(a)](Endoscopic decomposition) We have an isomorphism 
    \begin{equation}\label{thm0.2_a}
     \left({Rh}_* \mathrm{IC}_{M_{n,L}}\right)_\kappa  \simeq {i_\gamma}_*\left({Rh_\gamma}_* \mathrm{IC}_{M^\gamma_{n,L}}\right)_\kappa [-2d_\gamma] \in D^b_c(A).
    \end{equation}
    \item[(b)](Transfer) Assume $L'\in \mathrm{Pic}^{d'}(C)$ with $\mathrm{gcd}(d,n)=\mathrm{gcd}(d',n)$. Then we have
    \[
    \left({Rh_\gamma}_*\mathrm{IC}_{M^\gamma_{n,L}}\right)_\kappa \simeq  \left({Rh_\gamma}_* \mathrm{IC}_{M^\gamma_{n,L'}}\right)_{q\kappa}  \in  D^b_c(A_\gamma)
    \]
    where $q$ is an integer coprime to $n$ satisfying that
    \begin{equation}\label{condition}
    d = d'q  \quad\mathrm{mod}~~~n. 
    \end{equation}
   
\end{enumerate}
Moreover, both (a) and (b) hold in the bounded derived categories $D^b\mathrm{MHM}(-)$ of mixed Hodge modules refining $D_c^b(-)$.
\end{thm}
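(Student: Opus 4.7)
The plan is to follow the strategy of \cite{MS}, treating Theorem \ref{thm0.2} as an analog of the fundamental lemma that reduces to an explicit structure of the Hitchin fibration over its elliptic locus. The crucial new tools are the support theorems established in Sections \ref{sec1}--\ref{Sec2}, applied both to $M_{n,L}$ and to the relevant endoscopic moduli associated with the cyclic \'etale cover $\pi: C' \to C$ determined by $\gamma$.

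First I would reduce both parts (a) and (b) to statements over the open locus $A_\gamma^{\mathrm{ell}} := A_\gamma \cap A^{\mathrm{ell}}$. By Theorem \ref{thm0.1}, every summand in the BBD decomposition of $Rh_* \mathrm{IC}_{M_{n,L}}$ has generic point in $A^{\mathrm{ell}}$, and by the more general Theorem \ref{thm1.1} the same holds for $Rh_{\gamma,*}\mathrm{IC}_{M^\gamma_{n,L}}$ with respect to $A_\gamma^{\mathrm{ell}}$. Consequently, both sides of (a) and (b) are intermediate extensions from this open locus, so it suffices to construct the claimed isomorphisms after restriction to $A_\gamma^{\mathrm{ell}}$ and verify compatibility on local systems; uniqueness of IC-extension then propagates the result to $A_\gamma$ and $A$.

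For part (a), I would compute the $\kappa$-isotypic part of $Rh_*\mathrm{IC}_{M_{n,L}}$ over the elliptic locus by direct geometric analysis. For $a \in A^{\mathrm{ell}}$, the fiber $h^{-1}(a)$ is the compactified Prym of the integral spectral curve $C_a$, and $\Gamma$ acts fiberwise by tensoring with pullbacks of $n$-torsion line bundles. Under the matching $\gamma \leftrightarrow \kappa$ via the Weil pairing, standard character-theoretic arguments show that the $\kappa$-isotypic part of the cohomology is supported precisely where the $\gamma$-action has fixed points, namely on $A_\gamma^{\mathrm{ell}}$. Over this locus the spectral curve $C_a$ factors through $\pi: C' \to C$, and the $\gamma$-fixed locus in $h^{-1}(a)$ is a disjoint union of translates of a compactified Prym for $C_a / C'$; comparing cohomologies of these fibers and accounting for the shift $[-2d_\gamma]$ coming from the codimension of $A_\gamma$ in $A$ yields the isomorphism (a) over $A_\gamma^{\mathrm{ell}}$.

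For part (b), the essential observation is that over $A_\gamma^{\mathrm{ell}}$ the fibers of $h_\gamma$ are torsors under a fixed (compactified) Prym, with torsor class determined by $d \bmod n$. When $d = q d' \bmod n$ with $\gcd(q,n)=1$, multiplication-by-$q$ on the Prym identifies the two torsors; on cohomology it acts as $q$ on the character lattice of $\Gamma$, hence carries the $\kappa$-isotypic part for degree $d'$ isomorphically onto the $q\kappa$-isotypic part for degree $d$. Extending by intermediate extension concludes (b). The main obstacle will be the careful geometric analysis of the $\gamma$-fixed locus and its torsor structure in the strictly semistable singular setting, where one works with compactified Pryms of possibly singular integral spectral curves and must keep track of IC-normalizations rather than ordinary cohomology; this is precisely the input that Theorem \ref{thm1.1} is designed to supply. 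The refinement to $D^b\mathrm{MHM}(-)$ is formal, since eigen-decompositions under a finite group action, proper pushforwards, and intermediate extensions all lift canonically to mixed Hodge modules.
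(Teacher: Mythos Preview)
Your overall strategy---reduce to the elliptic locus via support theorems, then invoke the elliptic computations from \cite{Ngo,Yun3,MS}---matches the paper's, and your sketches of the elliptic analysis for (a) and of the MHM refinement are fine. But there is one genuine gap.

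You write that ``by the more general Theorem \ref{thm1.1} the same holds for $Rh_{\gamma,*}\mathrm{IC}_{M^\gamma_{n,L}}$ with respect to $A_\gamma^{\mathrm{ell}}$.'' Theorem \ref{thm1.1} is a support theorem for $h_\pi: M_{r,L}(\pi) \to A(\pi)$, not for $h_\gamma: M^\gamma_{n,L} \to A_\gamma$. In the coprime case these differ by a \emph{free} $G_\pi$-quotient and the transfer is immediate, but when $\gcd(n,d)\neq 1$ the pushforward map $q_M:M_{r,L}(\pi)\to M^\gamma_{n,L}$ has $G_\pi$-fixed points (e.g.\ $(\CO_{C'},0)$), so the two moduli spaces are not related by a free finite quotient and you cannot simply invoke Theorem \ref{thm1.1}. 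The paper handles this transfer in Proposition \ref{prop3.6}: one shows $\mathrm{IC}_{M^\gamma_{n,L}}$ is a direct summand of ${Rq_M}_*\mathrm{IC}_{M_{r,L}(\pi)}$ (Lemma \ref{lem3.4}), then uses Theorem \ref{thm1.1} together with the elliptic full-support calculation for $M_{r,L}(\pi)$ and the fact that finite pushforward preserves full support (Lemma \ref{lem3.5}) to conclude that the relevant $\kappa'$-isotypic piece on $A_\gamma$ has full support. Without this step your reduction to the elliptic locus for the right-hand side of (\ref{thm0.2_a}) is not justified.

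For (b) your torsor/multiplication-by-$q$ picture is morally correct but vague about how the degree actually changes; the paper makes this concrete in two steps: first compare $L'$ with $L'^{\otimes q}$ via the $q$-th power map on Prym components (as in \cite[Proposition 2.11]{MS}), obtaining the $\kappa\mapsto q\kappa$ shift, and then identify $M_{n,L'^{\otimes q}}$ with $M_{n,L}$ by tensoring with an $n$-th root of $L\otimes L'^{-q}$, which exists precisely because $d\equiv d'q \pmod n$.
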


Theorem \ref{thm0.2} concerns Higgs bundles with respect to $D$ satisfying that $\mathrm{deg}(D)$ is even and greater than $2g-2$.\footnote{When $\mathrm{deg}(D)$ is odd and greater than $2g-2$, Theorem \ref{thm0.2} also holds for odd rank $n$ by an identical proof; in fact applying the endoscopic correspondence in the proof of \cite[Theorem 3.3]{MS} only requires that $(n-1)\mathrm{deg}(D)$ is even.} By taking global cohomology, it recovers an identity between the (stringy) \emph{intersection} E-polynomials for the $\mathrm{SL}_n$- and the $\mathrm{PGL}_n$-Higgs moduli spaces:
\[
\mathrm{IE}(M_{n,L}; u,v) = \mathrm{IE}_{\mathrm{st},\mathrm{twisted}}\left([M_{n,L'}/\Gamma]; u,v \right).
\]
Here the intersection E-polynomial $\mathrm{IE}(-; u,v)$ is given in \cite[Section 1]{Mauri}, and the twisted stringy intersection E-polynomial is
\[
\mathrm{IE}_{\mathrm{st},\mathrm{twisted}}\left([M_{n,L'}/\Gamma]; u,v \right) = \sum_{\gamma \in \Gamma} \mathrm{IE}(M^\gamma_{n,L'};u,v)_{q\kappa} (uv)^{2d_\gamma};
\]
for each term on the righthand side the character $\kappa$ is matched with $\gamma$ via the Weil pairing and $q,L,L'$ are as in Theorem \ref{thm0.2}. This is analogous to the original Hausel--Thaddeus conjecture \cite{HT, Survey}. A natural question is if the intersection E-polynomial version of the Hausel--Thaddeus conjecture holds for $D= K_C$. This was recently conjectured by Mauri \cite{Mauri}, who also verified it for the case of $n=2$. We refer to Section \ref{HT_conj} for more discussions. 
 
\begin{rmk}
In \cite[Remark 3.30]{Survey}, Hausel proposed that a version of the topological mirror symmetry conjecture \cite{HT} should hold without the coprime assumption between the degrees and the rank, and he asked what is the cohomology theory we should use to formulate this. As mentioned above, Mauri proposed to use \emph{intersection cohomology}. Theorem \ref{thm0.2} provides further evidence that intersection cohomology is the correct theory to formulate the topological mirror symmetry for possibly singular moduli spaces. Our reasons come naturally from the decomposition theorem \cite{BBD} and the support theorem (Theorem \ref{thm1.1}).
\end{rmk}

\subsection{The Harder--Narasimhan theorem}\label{0.1}  The moduli space $N_{n,L}$ of (slope-)semistable vector bundles on $C$ of rank $n$ and determinant isomorphic to $L$ is an irreducible projective variety 
which has been studied intensively for decades. Similar to the Higgs case, the finite group $\Gamma = \mathrm{Pic}^0(C)[n]$ acts on $N_{n,L}$ via tensor product
\begin{equation}\label{action}
\CL \cdot \CE = \CL \otimes \CE, \quad \quad \CL \in \Gamma= \mathrm{Pic}^0(C)[n], \quad \CE \in N_{n,L}.
\end{equation}

Harder and Narasimhan \cite{HN} proved that, when $\mathrm{gcd}(n, d)=1$, the $\Gamma$-action on the cohomology $H^*(N_{n,L}, \BC)$ induced by (\ref{action}) is trivial. Other proofs of the Harder--Narasimhan theorem have been found by Atiyah--Bott \cite{AB} and Hausel--Pauly \cite{HP}.

The following theorem is a generalization of the Harder--Narasimhan theorem for arbitrary degree $d$. It is an immediate consequence of Theorem \ref{thm0.2}.

\begin{thm}\label{thm0.3}
The $\Gamma$-action on $\mathrm{IH}^*(N_{n,L}, \BC)$ induced by (\ref{action}) is trivial. Consequently, we obtain the match of the intersection cohomology groups for the varieties $N_{n,L}$ and $\check{N}_{n,L} := N_{n,L}/\Gamma$:
\begin{equation}\label{SL_PGL}
\mathrm{IH}^*(N_{n,L}, \BC) = \mathrm{IH}^*(\check{N}_{n,L}, \BC). 
\end{equation}
\end{thm}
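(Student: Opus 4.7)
The second assertion (\ref{SL_PGL}) is a formal consequence of the first. Since $\Gamma$ is finite and $\check{N}_{n,L} = N_{n,L}/\Gamma$ is a geometric quotient, $\mathrm{IH}^*(\check{N}_{n,L}, \BC) = \mathrm{IH}^*(N_{n,L}, \BC)^{\Gamma}$, so the triviality of the $\Gamma$-action is equivalent to (\ref{SL_PGL}). The task therefore reduces to showing that the $\kappa$-isotypic component $\mathrm{IH}^*(N_{n,L})_\kappa$ vanishes for every nontrivial character $\kappa \in \hat{\Gamma}$.

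The plan is to propagate this vanishing from the Higgs moduli $M_{n,L}$. The closed embedding $N_{n,L} \hookrightarrow M_{n,L}$, $\CE \mapsto (\CE, 0)$, is $\Gamma$-equivariant and realizes $N_{n,L}$ as the distinguished component of the $\BC^*$-fixed locus for the scaling action $t\cdot(\CE, \theta) = (\CE, t\theta)$. Since this $\BC^*$-action is Hitchin-equivariant and contracts the base $A$ to the origin with positive weights, we have $\mathrm{IH}^*(M_{n,L}) = \BH^*(A, Rh_*\mathrm{IC}_{M_{n,L}}) = \BH^*(h^{-1}(0), \mathrm{IC}_{M_{n,L}}|_{h^{-1}(0)})$. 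A Bialynicki--Birula / Braden-style hyperbolic localization argument along the $\BC^*$-action on $M_{n,L}$ (whose fixed loci are the zero-section $N_{n,L}$ and the variation-of-Hodge-structure strata) then identifies $\mathrm{IH}^*(N_{n,L})$, up to a shift, as a $\Gamma$-equivariant direct summand of $\mathrm{IH}^*(M_{n,L})$. It therefore suffices to prove $\mathrm{IH}^*(M_{n,L})_\kappa = 0$ for every $\kappa \ne 0$.

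For this I would invoke Theorem \ref{thm0.2}. Part (a) identifies $\mathrm{IH}^*(M_{n,L})_\kappa$ with $\mathrm{IH}^{*-2d_\gamma}(M^\gamma_{n,L})_\kappa$ when $\kappa$ is matched with $\gamma \ne 0$ via the Weil pairing. The endoscopic moduli $M^\gamma_{n,L}$ is described, via Ng\^o's endoscopy theory, in terms of Higgs bundles of rank $n/\mathrm{ord}(\gamma)$ on the cyclic \'etale cover $\pi_\gamma \colon C' \to C$ determined by $\gamma$. Applying the transfer (b) to move within the $\gcd(d, n)$-class of $L$, I would choose $L'$ so that the induced degree on $C'$ is coprime to $n/\mathrm{ord}(\gamma)$; this reduces the vanishing of the $\kappa$-component to the classical Harder--Narasimhan theorem \cite{HN, AB, HP} on the cover in the coprime (smooth) regime, where the action of the relevant subgroup of $\mathrm{Pic}^0(C')[n/\mathrm{ord}(\gamma)]$ on the cohomology of the semistable bundle moduli is already known to be trivial.

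The main obstacle will be the reduction in the second step: the $\Gamma$-equivariant extraction of $\mathrm{IH}^*(N_{n,L})$ as a summand of $\mathrm{IH}^*(M_{n,L})$ in the singular setting. This requires a hyperbolic localization statement for intersection cohomology on a singular ambient space, together with a careful description of all $\BC^*$-fixed components of $M_{n,L}$ (the VHS loci) and a check that their contributions do not obstruct the splitting off of the zero-section summand.
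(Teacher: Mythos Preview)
Your overall architecture --- embed $N_{n,L}$ into $M_{n,L}$ as the zero-section $\BC^*$-fixed component and use hyperbolic localization to extract $\mathrm{IH}^*(N_{n,L})$ as a $\Gamma$-equivariant direct summand of $\mathrm{IH}^*(M_{n,L})$ --- is exactly the approach of the paper (this is Proposition~\ref{prop4.1}).  The error is in the next step.

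You reduce to proving $\mathrm{IH}^*(M_{n,L})_\kappa=0$ for every nontrivial $\kappa$.  This is \emph{false}.  Theorem~\ref{thm0.2}~(a) identifies $\mathrm{IH}^*(M_{n,L})_\kappa$ with $\mathrm{IH}^{*-2d_\gamma}(M^\gamma_{n,L})_\kappa$, and the latter is nonzero: these $\kappa$-parts are precisely the endoscopic contributions that make up the twisted sectors on the $\mathrm{PGL}_n$ side of the Hausel--Thaddeus conjecture.  Your attempt to kill them via transfer does not work either: part~(b) only relates degrees $d,d'$ with $\gcd(d,n)=\gcd(d',n)$, so you cannot pass to the coprime setting when $\gcd(d,n)\ne 1$; and even in the coprime case the variant part of $H^*(M_{n,L})$ is nonzero (again, this is the whole content of topological mirror symmetry).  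Invoking classical Harder--Narasimhan on the cover is a red herring --- that concerns bundles, not Higgs bundles, and there is no vanishing of the $\kappa$-part to be had on the Higgs side.

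What you are missing is a degree argument.  Theorem~\ref{thm0.2}~(a) does tell you something useful: the variant part $(Rh_*\mathrm{IC}_{M_{n,L}})_{\mathrm{var}}$ is concentrated in cohomological degrees $\geq \min_{\gamma\ne 0}\{2d_\gamma\}$, so $\mathrm{IH}^k(M_{n,L})_{\mathrm{var}}=0$ for $k<\min_{\gamma\ne 0}\{2d_\gamma\}$.  Now observe that $\dim N_{n,L}=(n^2-1)(g-1)$ is independent of $D$, whereas by Lemma~\ref{lem4.2} each $d_\gamma$ grows linearly in $\deg(D)$.  Since $D$ was arbitrary subject to $\deg(D)>2g-2$, choose it large enough that $d_\gamma>\dim N_{n,L}$ for every $\gamma\ne 0$.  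Then the $\Gamma$-equivariant, degree-preserving inclusion $\mathrm{IH}^k(N_{n,L})\hookrightarrow \mathrm{IH}^k(M_{n,L})$ forces $\mathrm{IH}^k(N_{n,L})_{\mathrm{var}}=0$ for all $k\leq 2\dim N_{n,L}$, which is all $k$.  This is the key trick in the paper's proof.
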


The varieties $N_{n,L}$ and $\check{N}_{n,L}$ may be viewed as the moduli spaces of semistable $\mathrm{SL}_n$- and $\mathrm{PGL}_n$-bundles on the curve $C$, and Theorem \ref{thm0.3} shows that they share the same intersection cohomology. 

An alternative proof of Theorem \ref{thm0.3} may be obtained by Kirwan's surjectivity for intersection cohomology \cite{Kir0, Kiem}.\footnote{We are grateful to Young-Hoon Kiem and Mirko Mauri for very interesting and helpful discussions on this.} Our approach is to realize Theorem \ref{thm0.3} as a consequence of (a version of) the Hausel--Thaddeus topological mirror symmetry for Hitchin systems. This is close to \cite{HP} in spirit. The proof of Theorem \ref{thm0.3} here suggests that the isomorphism (\ref{SL_PGL}) is essentially a consequence of the fact that the Hitchin systems for $\mathrm{SL}_n$ and $\mathrm{PGL}_n$ share the same Hitchin base over which the decomposition theorems coincide restricting to the generic point. Hence a version of (\ref{SL_PGL}) may hold for general $G$ and its Langlands dual $G^\vee$ which we will explore in subsequent work.

\subsection{Acknowledgements}
 We would like to thank Young-Hoon Kiem and Mirko Mauri for very helpful discussions. We also thank Elsa Maneval for pointing out a missing parity assumption for Theorem \ref{thm0.2} in the previous version, and the referee for careful reading and useful suggestions. J.S. was supported by the NSF grant DMS-2134315.

\section{Support theorems for Hitchin fibrations}\label{sec1}

Throughout the rest of the paper, we fix a curve $C$ of genus $g \geq 2$, an integer $n \geq 2$, and a line bundle $L \in \mathrm{Pic}^d(C)$. Let $D$ be an effective divisor of degree $\mathrm{deg}(D) > 2g-2$.

\subsection{Support theorem}
Assume $n = mr$. Following \cite{MS}, we introduce the endoscopic moduli space $M_{r,L}(\pi)$ associated with a cyclic \'etale Galois cover $\pi: C' \to C$ which plays a crucial role in the cohomological study of $M_{n,L}$.

Let $\pi: C' \to C$ be a degree $m$ cyclic \'etale Galois cover with Galois group $G_\pi \simeq \BZ/m\BZ$. We denote by $M_{r,L}(\pi)$ the moduli of rank $r$ semistable Higgs bundles $(\CE, \theta)$ on $C'$ with respect to the divisor $D' : = \pi^* D$ satisfying that 
\[
\mathrm{det}(\pi_* \CE) \simeq L , \quad  \mathrm{trace}(\pi_*\theta) = 0.
\]
Here $\mathrm{trace}(\pi_*\theta)$ is an element in $H^0(C, \CO_{C}(D))$ which can be viewed as the projection of \[
\mathrm{trace}(\theta) \in H^0(C', \CO_{C'}(D')) = H^0(C, \pi_* \CO_{C'}(D'))\]
to the direct summand component $H^0(C, \CO_C(D))$:
\[
\mathrm{trace}(\pi_*\theta)  \in H^0(C, \CO_C(D)) \subset H^0(C', \pi_*\CO_{C'}(D')). 
\]
The moduli space $M_{r,L}(\pi)$ lies in the moduli of semistable $\mathrm{GL}_r$-Higgs bundles on $C'$, and the Hitchin fibration associated with the latter induces a Hitchin fibration 
\begin{equation}\label{Hitchin_relative}
h_\pi: M_{r,L}(\pi) \to A(\pi);
\end{equation}
see \cite[Section 1.2]{MS} for more details. The Hitchin base $A(\pi)$ naturally sits inside the $\mathrm{GL}_r$-Hitchin base $\widetilde{A}'$ associated with the curve $C'$,
\[
A(\pi)  \subset \widetilde{A}': = \bigoplus_{i=1}^r H^0(C', \CO_{C'}(iD')).
\]
We define the \emph{elliptic locus} $A^\mathrm{ell}(\pi) \subset A(\pi)$ to be the restriction of the elliptic locus of $\widetilde{A}'$ parameterizing integral spectral curves over $C'$.

Our main result of Sections \ref{sec1} and \ref{Sec2} is a support theorem for the Hitchin fibration (\ref{Hitchin_relative}) associated with the endoscopic moduli spaces.

\begin{thm}[Support Theorem]\label{thm1.1}
The generic point of any support of ${Rh_\pi}_* \mathrm{IC}_{M_{r,L}(\pi)}$ lies in the elliptic locus $A^{\mathrm{ell}}(\pi)$.
\end{thm}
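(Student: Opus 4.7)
My plan is to mimic the strategy of de Cataldo \cite{dC_SL} for the $\mathrm{SL}_n$-support theorem, but replace the classical Ng\^o support inequality (which requires nonsingularity of the total space) with the intersection-cohomology version from \cite{MS2}, and systematically work with the endoscopic data $(\pi, L)$ rather than with $\mathrm{SL}_n$ directly. The natural first step is to relate $h_\pi : M_{r,L}(\pi) \to A(\pi)$ to the $\mathrm{GL}_r$-Hitchin fibration on $C'$. Over the $\mathrm{GL}_r$-base $\widetilde{A}'$, the support theorem for the full $\mathrm{GL}_r$-direct image (of the IC complex on the possibly singular $\mathrm{GL}_r$-moduli) is an application of \cite{MS2}; the endoscopic moduli $M_{r,L}(\pi)$ sits inside the $\mathrm{GL}_r$-moduli as the closed substack cut out by $\det(\pi_*\CE)\simeq L$ and $\mathrm{trace}(\pi_*\theta)=0$, and $A(\pi)\subset \widetilde{A}'$ is the corresponding linear subspace. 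I would first arrange a Chaudouard--Laumon--type product decomposition relating the $\mathrm{GL}_r$-moduli on $C'$ to $M_{r,L}(\pi)$ times a Prym-style Higgs factor over $A(\pi)\times (\text{trace/det fiber})$, so that supports of $Rh_{\pi*}\mathrm{IC}_{M_{r,L}(\pi)}$ are pulled back from supports of the $\mathrm{GL}_r$-direct image on $C'$.

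Next I would make the weak abelian fibration structure on $h_\pi$ explicit. Over $A^{\mathrm{ell}}(\pi)$, the fibers are torsors under a smooth commutative group scheme $P(\pi)\to A(\pi)$ whose generic geometric fiber is the kernel of $\mathrm{Nm}\circ\pi_*$ on compactified Jacobians of the spectral curves $C'_a$ (the relevant Prym). I need to verify the $\delta$-regularity and polarizability hypotheses of \cite{MS2} in this endoscopic setting: the polarization comes from the theta line bundle on the spectral family restricted to the Prym, and $\delta$-regularity from the product decomposition of $P(\pi)$ along the discriminant strata of spectral curves on $C'$. Granting these, the \cite{MS2} inequality gives $\mathrm{codim}_{A(\pi)}(Z)\le \delta_P(Z)$ for every support $Z$ of $Rh_{\pi*}\mathrm{IC}_{M_{r,L}(\pi)}$.

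The remaining step is to rule out supports whose generic point lies in $A(\pi)\setminus A^{\mathrm{ell}}(\pi)$. The non-elliptic locus is stratified according to how the spectral cover $C'_a\to C'$ splits (taking into account also the $G_\pi$-action); along a stratum with a nontrivial splitting type the affine part of the N\'eron model of $P(\pi)$ acquires extra toric/additive factors, so $\delta_P$ jumps. As in \cite{CL, dC_SL}, a direct stratum-by-stratum computation shows that $\delta_P$ strictly exceeds the codimension of any stratum lying in the non-elliptic locus, contradicting the \cite{MS2} inequality; equivalently one can write the restriction of $Rh_{\pi*}\mathrm{IC}$ to such a stratum as a convolution product of IC-direct images of lower-rank endoscopic Hitchin fibrations and verify that supports of these factors remain in their own elliptic loci by induction on $r$.

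The main obstacle is the bookkeeping for the trace/determinant condition on reducible spectral curves: one must ensure that the analog of the Chaudouard--Laumon product decomposition really identifies $M_{r,L}(\pi)$ locally over the non-elliptic strata as a fibered product of lower-rank endoscopic pieces with the correct normalization, so that the inductive computation of $\delta_P$ on each stratum reflects exactly the normalization (elliptic) locus on the factors. In the $\mathrm{SL}_n$ case this is where de Cataldo has to be most careful, and here there is the additional complication that $\pi$ introduces $G_\pi$-equivariance and that the ambient moduli are singular, so the identification must be performed at the level of IC complexes via the \cite{MS2} framework rather than on smooth total spaces. Once this local model is in place, the above argument closes.
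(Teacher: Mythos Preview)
Your overall architecture—weak abelian fibration on $h_\pi$, then the \cite{MS2} support inequality $\mathrm{codim}\,Z\le\delta_Z$, then a $\delta$-versus-codimension comparison on the non-elliptic strata—is exactly the paper's. But you misidentify where the real work lies, and this creates a genuine gap.

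The hypothesis of \cite[Theorem~1.8]{MS2} is \emph{not} $\delta$-regularity of the group scheme; it is the relative-dimension (truncation) bound
\[
\tau_{>2e}\bigl(Rh_{\pi*}\,\mathrm{IC}_{M_{r,L}(\pi)}[-\dim M_{r,L}(\pi)]\bigr)=0,
\]
and this is the entire technical burden of the proof. Polarizability and the weak-abelian axioms are easy (they concern only $P(\pi)$ and were already checked in \cite{MS}); the $\delta$-versus-codimension comparison on non-elliptic strata is likewise already done in \cite[Proposition~2.7]{MS} as a multi-variable $\delta$-inequality depending only on the group scheme, not on the total space. So your proposed stratum-by-stratum induction on IC complexes of lower-rank endoscopic pieces, and your worry about making the Chaudouard--Laumon product decomposition work at the IC level over reducible spectral loci, are aimed at a step that requires no new input.

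What you are missing is how to verify the truncation bound when $M_{r,L}(\pi)$ is singular. The paper does this by a \emph{proper-approximation} argument: one shows that the moduli \emph{stack} $\CM_{r,L}(\pi)$ is smooth, that the map $\CM_{r,L}(\pi)\to M_{r,L}(\pi)$ admits proper approximations (inherited from the $\mathrm{GL}_r$ case via a Cartesian square relating $\widetilde{\CM}_{1,0}\times\CM_{r,L}(\pi)$ to $\widetilde{\CM}'_{r,d}$ as a $\Gamma$-quotient), hence $\mathrm{IC}_{M_{r,L}(\pi)}$ splits off the stack pushforward, and then one transports the fiber-dimension bound known for the $\mathrm{GL}_r$ Hitchin stack on $C'$ to $\CM_{r,L}(\pi)$. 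Your product-decomposition intuition is in fact used here, but at the level of stacks and for the purpose of bounding fiber dimensions, not for an inductive IC decomposition over non-elliptic strata. Without this step, invoking \cite{MS2} is not justified.
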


When $m =1$ and $\pi = \mathrm{id}$, the moduli space $M_{r,L}(\pi)$ and its Hitchin fibration (\ref{Hitchin_relative}) recover the $\mathrm{SL}_n$-Higgs moduli space $M_{n,L}$ and (\ref{Hitchin}). Hence Theorem \ref{thm1.1} recovers Theorem \ref{thm0.1}. It also generalizes \cite[Theorem 2.3]{MS} for nonsingular ambient spaces. 

Theorem \ref{thm1.1} is a first step towards the study of the global topology for $\mathrm{SL}_n$-Higgs moduli space $M_{n,L}$ and the associated endoscopic moduli spaces. It shows that their global intersection cohomology groups are governed by the (nonsingular) elliptic parts. A similar phenomenon was proven for the $\mathrm{GL}_n$-Higgs moduli spaces and moduli of 1-dimensional semistable sheaves on toric del Pezzo surfaces \cite{MS2}.

\subsection{Weak abelian fibrations}

Since in general the total moduli space $M_{r,L}(\pi)$ may be singular, we use the framework developed in \cite{MS2} to study the Hitchin fibration $h_\pi: M_{r,L}(\pi) \to A(\pi)$. We first show that $h_\pi$ admits the structure as a weak abelian fibration.
 
For a smooth $A(\pi)$-group scheme $g_\pi: P(\pi) \to A(\pi)$ with geometrically connected fibers acting on $M_{r,L}(\pi)$, we say that the triple $(M_{r,L}(\pi), P(\pi), A(\pi))$ is a {weak abelian fibration} of relative dimension $e$, if 
\begin{enumerate}
    \item[(a)] every fiber of the map $g_\pi$ is pure of dimension $e$, and $M_{r,L}(\pi)$ has pure dimension \[\mathrm{dim}M_{r,L}(\pi) =e  +\mathrm{dim}A(\pi),
    \]
    \item[(b)] the action of $P(\pi)$ on $M_{n,L}(\pi)$ has \emph{affine} stabilizers, and
    \item[(c)] the Tate module $T_{\overline{\mathbb{Q}}_l}(P(\pi))$ associated with the group scheme $P(\pi)$ is polarizable.
\end{enumerate}
We refer to \cite[Section 2]{MS2} for more details about these conditions.

In the following, we complete $h_\pi: M_{n,L}(\pi) \to A(\pi)$ into a weak abelian fibration by constructing the group scheme $P(\pi)$ following \cite[Section 4]{dC_SL} and \cite[Section 2.4]{MS}.

Let $\CC \to A(\pi)$ be the universal spectral curve given by the restriction of the universal spectral curve on $\widetilde{A}'$. The relative degree 0 Picard scheme\footnote{It parameterizes line bundles on the closed fibers whose restrictions to each irreducible components are of degree 0. By \cite[Section 8]{Neron} $\mathrm{Pic}^0(\CC/A(\pi))$ an an algebraic space over $A(\pi)$; furthermore, as explained in the last paragraph of \cite[Page 715]{CL} it is indeed a scheme since it sits inside the (quasi-projectve) moduli space of semistable Higgs bundles on $C$.}$\mathrm{Pic}^0(\CC/A(\pi))$ admits a map
\[
\mathrm{Pic}^0(\CC/A(\pi))\to \mathrm{Pic}^0(C)\times A(\pi)
\]
between $A(\pi)$-group schemes as the composition (see the paragraph following \cite[Proposition 2.5]{MS}):
\[
\mathrm{Pic}^0(\CC/A(\pi)) \to \mathrm{Pic}^0(C')\times A(\pi) \to \mathrm{Pic}^0(C)\times A(\pi).
\]
We define $P(\pi)$ to be the identity component of the kernel of this map, which is naturally an $A(\pi)$-group scheme.\footnote{We note that the group scheme $P(\pi)$ is denoted by $P^0$ in \cite{MS}.} By viewing a Higgs bundle in $M_{r,L}(\pi)$ as a pure 1-dimensional semistable sheaf on the spectral curve $C_a$, the $A(\pi)$-group scheme $P(\pi)$ acts on $M_{r,L}(\pi)$ via tensor product (\emph{c.f.} \cite[Lemma 3.4.1]{dCRS}). It was proven in \cite[Proposition 2.6]{MS} that $(M_{r,L}(\pi), A(\pi), P(\pi))$ is a weak abelian fibration of relative dimension $e := \mathrm{dim}M_{r,L}(\pi) - \mathrm{dim}A(\pi)$ when $\mathrm{gcd}(n,d)=1$. In fact, this holds also in the singular case:

\begin{prop}[\emph{c.f. \cite[Proposition 2.6]{MS}}]\label{prop2.2}
The triple $(M_{r,L}(\pi), A(\pi), P(\pi))$ is a weak abelian fibration of relative dimension $e= \mathrm{dim}M_{r,L}(\pi) - \mathrm{dim}A(\pi)$.
\end{prop}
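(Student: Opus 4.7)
The plan is to verify the three conditions (a), (b), (c) of a weak abelian fibration in turn, following the pattern of the proof of [MS, Proposition 2.6] in the coprime case and isolating the modifications needed when $M_{r,L}(\pi)$ is singular.

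First I observe that the group scheme $g_\pi : P(\pi) \to A(\pi)$, together with its tensor action on $M_{r,L}(\pi)$, is defined purely in terms of the universal spectral curve $\CC \to A(\pi)$, with no use of $\gcd(n,d) = 1$. Accordingly, the equidimensionality of the fibers of $g_\pi$ (the $P(\pi)$-half of (a)) and the polarizability of the Tate module $T_{\overline{\mathbb{Q}}_l}(P(\pi))$ (condition (c)) are statements about $P(\pi) \to A(\pi)$ alone, and the arguments of [MS, Proposition 2.6] apply verbatim. The new content concerns the purity of $\dim M_{r,L}(\pi)$ in (a) and the affine stabilizer property in (b).

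For purity of $M_{r,L}(\pi)$, I would combine two inputs. Under $\deg(D) > 2g-2$ the Hitchin map $h_\pi$ is flat onto the smooth affine base $A(\pi)$ with equidimensional fibers --- these fibers are compactified Prym-type varieties of the (possibly non-integral) universal spectral curve, of the expected relative dimension $e$ via the Beauville--Narasimhan--Ramanan correspondence. Flatness together with equidimensional fibers over a smooth base yields equidimensionality of the total space with the claimed dimension $e + \dim A(\pi)$. In the coprime case smoothness of $M_{r,L}(\pi)$ gave this directly; here one must instead appeal to the equidimensionality of the (possibly reducible) compactified Jacobians appearing as fibers.

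For condition (b), I would replace a closed point of $M_{r,L}(\pi)$ by its polystable representative, viewed as a pure one-dimensional sheaf $F = \bigoplus_i F_i^{\oplus m_i}$ on the spectral curve $C_a$. The $P(\pi)_a$-stabilizer is
\[
\mathrm{Stab}_{P(\pi)_a}(F) = \{\CL \in P(\pi)_a : \CL \otimes F \simeq F\},
\]
and the condition $\CL \otimes F \simeq F$ translates into the existence of a bijection $\sigma$ of Jordan--H\"older factors respecting multiplicities together with $\CL \otimes F_i \simeq F_{\sigma(i)}$ --- a finite set of constraints on $\CL$. Hence this stabilizer is a finite subgroup scheme of $P(\pi)_a$, in particular affine. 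The main obstacle I anticipate is the purity argument above: in the coprime case smoothness gives it for free, whereas here equidimensionality must be derived from the fiber structure of $h_\pi$ together with its flatness over $A(\pi)$, and must be controlled uniformly over the non-elliptic locus where spectral curves degenerate.
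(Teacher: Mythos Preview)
Your treatment of condition (c), and the $P(\pi)$-half of (a), is correct and matches the paper: these depend only on the group scheme $P(\pi)$ and were verified in \cite{MS} without using $\gcd(n,d)=1$.

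There is, however, a genuine error in your argument for (b). You claim the stabilizer is a \emph{finite} subgroup scheme because the condition $\CL\otimes F\simeq F$ amounts to ``a finite set of constraints'' of the form $\CL\otimes F_i\simeq F_{\sigma(i)}$. A finite set of closed conditions does not force a zero-dimensional locus. Concretely, take a reducible spectral curve $C_a=C_1\cup C_2$ meeting in $k\geq 2$ nodes and a polystable sheaf $F=F_1\oplus F_2$ with $F_i$ a line bundle on $C_i$. The condition $\CL|_{C_i}\simeq\CO_{C_i}$ for $i=1,2$ cuts out the kernel of $\mathrm{Pic}^0(C_a)\to\mathrm{Pic}^0(C_1)\times\mathrm{Pic}^0(C_2)$, which contains a torus $\BG_m^{k-1}$ coming from the node gluing data. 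Intersecting with $P(\pi)_a$ can still leave a positive-dimensional affine group. So stabilizers are affine but not finite in general, and your argument as written does not prove affineness. The paper avoids this by a one-line reduction: the stabilizer in $P(\pi)_a$ is a closed subgroup of the stabilizer for the $\mathrm{Pic}^0(\CC/\widetilde{A}')$-action on the ambient $\mathrm{GL}_r$-moduli, and the latter is affine by \cite[Lemma 3.5.4]{dCRS}; a closed subgroup of an affine group is affine.

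A smaller issue: for the purity statement in (a) you invoke flatness of $h_\pi$, but flatness is not established in the singular setting (miracle flatness needs a smooth source, which you have just given up). The paper simply declares (a) ``obvious'', relying on the known dimension of $M_{r,L}(\pi)$ and of the fibers of $g_\pi$; you are working harder than necessary here, and your route has a gap unless you supply a separate reason for flatness.
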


\begin{proof}
The condition (a) is obvious. The condition (c) only concerns the group scheme $P(\pi)$ which was already verified in (ii) of \cite[Proof of Proposition 2.6]{MS}. As in (i) of \cite[Proof of Proposition 2.6]{MS}, the affineness of the stabilizers for the $P(\pi)$-action on $M_{n,L}(\pi)$ follows from the same statement for the corresponding $\mathrm{GL}_r$-Higgs moduli space \cite[Lemma 3.5.4]{dCRS}, since the stabilizers of the $P(\pi)$-actions are closed subgroups of the stabilizers of the $\mathrm{Pic}^0(\CC/\widetilde{A}')$-action. Hence the condition (b) holds as well.
\end{proof}

\subsection{$\delta$-inequalities}
For a closed point $a \in A(\pi)$, we denote by $\delta(a)$ the dimension of the affine part of the algebraic group $P(\pi)_a$ over $a$. This defines an upper semi-continuous function
\[
\delta: A(\pi) \to \BN, \quad a \mapsto \delta(a). 
\]
For a closed subvariety $Z \subset A(\pi)$, we define $\delta_Z$ to be the minimal value of the function $\delta$ on $Z$. Following the strategy of \cite{CL, dC_SL}, it was proven in \cite[Section 2]{MS} that $\delta$-inequalities of the group scheme $P(\pi)$ effectively control the decomposition theorem for $h_\pi: M_{r,L}(\pi) \to A(\pi)$, as we now review.

A key observation of \cite{MS} is that, when $\mathrm{deg}(D)> 2g-2$, a combination of the multi-variable $\delta$-inequality \cite[Proposition 2.7]{MS} and the support inequality (\ref{support}) below implies that the decomposition theorem of $h_\pi: M_{r,L}(\pi) \to A(\pi)$ has no support with generic point lying in $A(\pi) \setminus A^{\mathrm{ell}}(\pi)$.

\begin{prop}[\cite{MS} Section 2.5: Proof of Theorem 2.3 (a)] \label{prop2.3}
Assume that for any support $Z$ of ${Rh_\pi}_*\mathrm{IC}_{M_{r,L}(\pi)}$, we have
\begin{equation}\label{support}
    \mathrm{codim}_{A(\pi)}Z \leq \delta_Z. 
\end{equation}
Then the generic points of all supports are contained in $A^{\mathrm{ell}}(\pi)$.
\end{prop}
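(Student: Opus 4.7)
The plan is to argue by contradiction, following the strategy of Section 2.5 of \cite{MS} essentially verbatim. The key point is that the transition from the smooth case to the present (possibly singular) one has already been absorbed into the hypothesis (\ref{support}): once one takes the support inequality as given, no further input about the intersection cohomology complex $\mathrm{IC}_{M_{r,L}(\pi)}$ enters the argument, and everything reduces to a bound on $\delta$ over the base $A(\pi)$.

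Concretely, suppose $Z$ is a support of ${Rh_\pi}_*\mathrm{IC}_{M_{r,L}(\pi)}$ whose generic point $\eta_Z$ lies in $A(\pi) \setminus A^{\mathrm{ell}}(\pi)$. First, I would stratify $A(\pi)$ by the combinatorial type of the reducible/non-reduced spectral curve: points $a$ for which $C_a$ is non-integral assemble into locally closed strata $A_\lambda$ indexed by the partition-type data $\lambda$ recording how $C_a$ decomposes. Locate $\eta_Z$ in some such stratum $A_\lambda$, so that $\delta_Z \leq \delta(\eta_Z)$ equals the generic value of $\delta$ on $A_\lambda$ (by upper semi-continuity).

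Next, I would invoke the multi-variable $\delta$-inequality \cite[Proposition 2.7]{MS} applied to the weak abelian fibration $(M_{r,L}(\pi), A(\pi), P(\pi))$ provided by Proposition \ref{prop2.2}. This estimate bounds $\delta(a)$ on each $A_\lambda$ from below in terms of the partition data, and, crucially, under the hypothesis $\mathrm{deg}(D) > 2g-2$, it does so strictly against the codimension contributions coming from the individual components of $C_a$. The upshot is the strict inequality $\delta(\eta_Z) < \mathrm{codim}_{A(\pi)} A_\lambda$. Since $Z$ is a closed irreducible subvariety with generic point in $A_\lambda$, we also have $\mathrm{codim}_{A(\pi)} Z \geq \mathrm{codim}_{A(\pi)} A_\lambda$. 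Chaining these gives
\[
\mathrm{codim}_{A(\pi)} Z \;\geq\; \mathrm{codim}_{A(\pi)} A_\lambda \;>\; \delta(\eta_Z) \;\geq\; \delta_Z,
\]
which contradicts the hypothesis (\ref{support}). Hence $\eta_Z$ must lie in $A^{\mathrm{ell}}(\pi)$.

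I expect the main obstacle to be ensuring that the multi-variable $\delta$-inequality of \cite[Proposition 2.7]{MS} genuinely applies in the current setting: the proof there is stated for the coprime case, but it relies only on the structure of $A(\pi)$, the group scheme $P(\pi)$, and the geometry of the universal spectral curve, not on smoothness of $M_{r,L}(\pi)$. Proposition \ref{prop2.2} precisely checks that these structural ingredients survive in the singular case, so once that is invoked, the combinatorial bookkeeping of partition types and the comparison of $\delta$ with codimension go through unchanged. The only genuinely new feature relative to \cite{MS} is that we apply this comparison against a support of $\mathrm{IC}_{M_{r,L}(\pi)}$, which is harmless because (\ref{support}) already encodes the sheaf-theoretic information and the remaining argument is purely about the base.
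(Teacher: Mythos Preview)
Your proposal is correct and follows essentially the same route as the argument the paper cites from \cite[Section 2.5]{MS}: stratify the non-elliptic locus by the combinatorial type of the spectral curve, apply the multi-variable $\delta$-inequality \cite[Proposition 2.7]{MS} on each stratum to obtain the strict bound $\delta < \mathrm{codim}$ there (using $\mathrm{deg}(D) > 2g-2$), and contradict the assumed support inequality (\ref{support}). You also correctly isolate the reason the coprime hypothesis is unnecessary here: the $\delta$-inequality concerns only $P(\pi)$ and the geometry of the universal spectral curve over $A(\pi)$, both of which are supplied by Proposition \ref{prop2.2} independently of the degree. One small wording slip: the multi-variable inequality bounds $\delta$ from \emph{above} by the codimension (strictly), not from below; your stated conclusion $\delta(\eta_Z) < \mathrm{codim}_{A(\pi)} A_\lambda$ is the right one.
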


When the ambient space $M_{r,L}(\pi)$ is nonsingular, the support inequality (\ref{support}) follows from Ng\^o's work \cite{Ngo}. A singular version was established recently in \cite{MS2} which generalizes Ng\^o's original support inequality.

Recall that $e$ is the relative dimension for the weak abelian fibration $(M_{r,L}(\pi),A(\pi),P(\pi))$ of Proposition \ref{prop2.2}.

\begin{thm}[\cite{MS2} Theorem 1.8]\label{thm2.4}
Suppose we have the vanishing
\begin{equation}\label{truncation}
    \tau_{>2e}\left( Rh_{\pi *}\mathrm{IC}_{M_{r,L}(\pi)}[-\mathrm{dim}M_{r,L}(\pi)]\right) = 0,
\end{equation}
where $\tau_{>\bullet}(-)$ denotes the standard truncation functor.  
Then the inequality (\ref{support}) holds for any support $Z$.
\end{thm}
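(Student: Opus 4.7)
The plan is to adapt Ng\^o's original support inequality for weak abelian fibrations \cite{Ngo} to the singular, intersection-cohomology setting, using the truncation hypothesis (\ref{truncation}) as a substitute for the amplitude bound $Rh_*\overline{\BQ}_l \in [0, 2e]$ that comes for free in the smooth case.

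First, I would upgrade the $P(\pi)$-action on $M_{r,L}(\pi)$ to a cap-product action of the Tate module $T = T_{\overline{\BQ}_l}(P(\pi))$ on the direct image complex $Rh_{\pi *}\mathrm{IC}_{M_{r,L}(\pi)}$. Concretely, this means exhibiting a module structure over the exterior algebra generated by $T[1]$, so that the $\delta$-structure of $g_\pi\colon P(\pi)\to A(\pi)$ translates into a fiberwise varying rank: over a point $a\in A(\pi)$ the relevant Tate module has rank $2(\mathrm{dim}\, P(\pi) - \delta(a))$, coming from the abelian quotient of $P(\pi)_a$.

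Next, given a support $Z$ of $Rh_{\pi *}\mathrm{IC}_{M_{r,L}(\pi)}$, take an associated simple summand $\mathrm{IC}_Z(\CL)[-r]$ from the decomposition theorem. I would localize the exterior algebra action at a generic point $\eta$ of $Z$, where $\delta(\eta) = \delta_Z$. The polarizability of $T$ (condition (c) of the weak abelian fibration, Proposition~\ref{prop2.2}), combined with the truncation hypothesis (\ref{truncation}), should force the induced action on the top non-vanishing stalk of that summand at $\eta$ to be free of rank one over the exterior algebra on $T_\eta$, which has rank $2(\mathrm{dim}\, P(\pi) - \delta_Z)$. This is the Ng\^o-style freeness step: polarizability provides a Lefschetz-type non-degeneracy on the graded pieces, and (\ref{truncation}) caps the cohomological range so that no extra freedom can absorb the action. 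Freeness then implies that the stalk of $Rh_{\pi *}\mathrm{IC}_{M_{r,L}(\pi)}[-\mathrm{dim}\, M_{r,L}(\pi)]$ at $\eta$ occupies a range of length at least $2(\mathrm{dim}\, P(\pi) - \delta_Z)$. Comparing this with the perverse amplitude contributed by $\mathrm{IC}_Z(\CL)[-r]$, whose support has dimension $\mathrm{dim}\, A(\pi) - \mathrm{codim}_{A(\pi)} Z$, and invoking $\mathrm{dim}\, M_{r,L}(\pi) = \mathrm{dim}\, A(\pi) + e$ from Proposition~\ref{prop2.2}, one reads off $\mathrm{codim}_{A(\pi)} Z \leq \delta_Z$.

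The main obstacle is exactly the freeness step: in the smooth case of \cite{Ngo} the amplitude of $Rh_*\overline{\BQ}_l$ is controlled directly by proper base change and fiber dimension, but for intersection cohomology of a possibly singular $M_{r,L}(\pi)$ no such bound is automatic, and without it the usual argument that the exterior algebra acts freely on the top stalk collapses. The hypothesis (\ref{truncation}) is engineered precisely to reinstate the input that the freeness/Lefschetz argument requires; once it is granted, the remainder of the derivation largely tracks \cite{Ngo, CL, dC_SL}, with intersection cohomology replacing constant coefficients throughout.
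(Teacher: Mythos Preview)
The paper does not prove this statement: Theorem~\ref{thm2.4} is quoted verbatim from \cite[Theorem~1.8]{MS2} and used as a black box. Together with Proposition~\ref{prop2.3} it reduces Theorem~\ref{thm1.1} to verifying the relative dimension bound~(\ref{truncation}), which is then the subject of Section~\ref{Sec2}. So there is no in-paper proof to compare your attempt against.

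That said, your outline is the expected shape of the argument carried out in \cite{MS2}: one lifts the $P(\pi)$-action to a cap-product action of the Tate module on $Rh_{\pi *}\mathrm{IC}_{M_{r,L}(\pi)}$, uses polarizability (condition (c) of Proposition~\ref{prop2.2}) as the Lefschetz-type input, and replaces the automatic amplitude bound of the smooth case by the hypothesis~(\ref{truncation}). One caveat on your freeness step: the Ng\^o argument does not literally prove that the stalk at $\eta$ is free of rank one over the full exterior algebra on $T_\eta$. Rather, one shows that if a simple summand supported on $Z$ appears in some perverse degree, the Tate-module action (together with polarizability) forces that local system to recur over a cohomological range of length at least $2(e-\delta_Z)$; comparing this range against the amplitude cap provided by~(\ref{truncation}) and the perverse support conditions then yields $\mathrm{codim}_{A(\pi)} Z \le \delta_Z$. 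Your sketch captures the right ingredients, but the mechanism is propagation of a fixed local system through many degrees rather than freeness of a single stalk.
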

As a consequence of Proposition \ref{prop2.3} and Theorem \ref{thm2.4}, Theorem \ref{thm1.1} follows from the relative dimension bound (\ref{truncation}), which we prove in the next section.

\section{Proper approximations and support theorems}\label{Sec2}

\subsection{Overview}
The main purpose of this section is to complete the proof of Theorem \ref{thm1.1}. As we explained at the end of Section \ref{sec1}, it suffices to prove the relative dimension bound (\ref{truncation}) which we complete in the following.

\subsection{Proper approximations}

We follow the strategy of \cite[Section 3]{MS2} to prove (\ref{truncation}). 

Let $q: \CW \to W$ be a morphism from a nonsingular Artin stack of finite type to an algebraic variety. Modelled on \cite[Proposition 3.6]{MS2}, we say that $q$ has \emph{a proper approximation} if, for any $R >0$, there exists a nonsingular scheme $W_R$ and an Artin stack $\CX_R$ with a commutative diagram
  \begin{equation}\label{diagram}
    \begin{tikzcd}[column sep=small]
    W_R \arrow[dr, "p_W"] \arrow[rr, hook, "j"] & & \CX_R \arrow[dl, "p_\CX"] \\
       & \CW  & 
\end{tikzcd}
\end{equation}
satisfying the following properties:
\begin{enumerate}
     \item[(a)] $p_\CX$ is an affine space bundle,
     \item[(b)] $j: W_R \hookrightarrow \CX_R$ is an open immersion,
     \item[(c)] the composition $q_R: W_R \xrightarrow{p_W}\CW \xrightarrow{q} W$ is projective, and
    \item[(d)] for the complement $\CZ_R: = \CX_R \setminus W_R$, we have
    \[
    \mathrm{codim}_{\CX_R}\left( \CZ_R\right)>R.
    \]
\end{enumerate}

\begin{prop}\label{Prop3.2}
Assume that $q: \CW \to W$ has a proper approximation. Then the following statements hold.
\begin{enumerate}
    \item[(1)] We have a splitting
    \begin{equation}\label{splitting}
    Rq_* \BC  \simeq \mathrm{IC}_W[-\mathrm{dim}W] \oplus \CK  \in D_c^+(W).
    \end{equation}
    \item[(2)] Let $q': \CW' \to W'$ be the pullback of $q$ along a morphism $f: W' \to W$ with $\CW'$ a nonsingular stack. Then $q'$ has a proper approximation. 
\end{enumerate}
\end{prop}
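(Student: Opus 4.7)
The plan for (1) is to use the proper approximation to pull back the splitting given by the decomposition theorem for the projective morphism $q_R$. I would start by exploiting the affine bundle $p_\CX$: the relation $Rp_{\CX*}\BC_{\CX_R}\simeq \BC_\CW$ gives $R(q\circ p_\CX)_*\BC_{\CX_R}\simeq Rq_*\BC_\CW$. Writing $i_R:\CZ_R\hookrightarrow \CX_R$ for the closed complement of $W_R$ and pushing the open-closed triangle $i_{R*}i_R^!\BC_{\CX_R}\to \BC_{\CX_R}\to Rj_*\BC_{W_R}$ forward along $q\circ p_\CX$, I obtain a distinguished triangle on $W$
\begin{equation*}
\mathrm{Err}_R\to Rq_*\BC_\CW\xrightarrow{\phi_R}Rq_{R*}\BC_{W_R}\xrightarrow{+1},
\end{equation*}
with $\mathrm{Err}_R := R(q\circ p_\CX)_* i_{R*}i_R^!\BC_{\CX_R}$. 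Since $\CW$ is nonsingular and $p_\CX$ is an affine bundle, $\CX_R$ is nonsingular; by purity combined with (d), $i_R^!\BC_{\CX_R}\in D^{\geq 2R+2}$, and pushforward preserves this lower bound, so $\mathrm{Err}_R\in D^{\geq 2R+2}$.

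By (c), $q_R:W_R\to W$ is projective with $W_R$ nonsingular; for $R$ large, surjectivity of $q$ together with a fiber-dimension argument (no fiber of $p_\CX$ can sit inside $\CZ_R$ once $R > \dim \CW$) shows $q_R$ is surjective. The decomposition theorem then splits $Rq_{R*}\BC_{W_R}\simeq \mathrm{IC}_W[-\dim W]\oplus \CK_R$, with $\mathrm{IC}_W$ appearing via the permutation local system on a regular open of $q_R$. Let $\iota_R,\pi_R$ be the inclusion and retraction of this summand, and set $B:=\mathrm{IC}_W[-\dim W]$, which lives in cohomological degrees $[0,\dim W]$. For $R\geq \dim W$, the disjointness of cohomological supports yields
\begin{equation*}
\mathrm{Hom}^k(B,\mathrm{Err}_R) = \mathrm{Hom}^k(\mathrm{Err}_R,B) = 0 \quad\text{for } k\in\{-1,0,1\}.
\end{equation*}
The vanishing of $\mathrm{Hom}^1(B,\mathrm{Err}_R)$ lets me lift $\iota_R$ to a morphism $\iota':B\to Rq_*\BC_\CW$ with $\phi_R\circ\iota'=\iota_R$, uniquely by the vanishing of $\mathrm{Hom}^0(B,\mathrm{Err}_R)$. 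Setting $\pi':=\pi_R\circ\phi_R$, the identity $\pi'\circ\iota'=\pi_R\circ\iota_R=\mathrm{id}_B$ exhibits $B$ as a direct summand of $Rq_*\BC_\CW$, proving (1).

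For (2), I would simply base-change the approximation of $q$ along $f$. For a given $R$, let $R_0 := R + \Delta + 1$ where $\Delta$ bounds the relative fiber dimension of $f$, and set $\CX'_R := \CX_{R_0}\times_W W'$, $W'_R := W_{R_0}\times_W W'$, with induced structure maps to $\CW'$. Base change preserves affine bundles, open immersions, and projective morphisms, so (a)--(c) are automatic; $\CX'_R$ is nonsingular as an affine bundle over the nonsingular stack $\CW'$, and hence the open subscheme $W'_R$ is also nonsingular. For (d), the base-change map $\CX'_R\to \CX_{R_0}$ has fibers coinciding with those of $f$, so the fiber-dimension estimate gives $\mathrm{codim}_{\CX'_R}(\CZ'_R)\geq \mathrm{codim}_{\CX_{R_0}}(\CZ_{R_0})-\Delta > R$.

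The main obstacle is the lifting step in (1): since $Rq_{R*}\BC_{W_R}$ is not equal to $Rq_*\BC_\CW$, one must transfer the decomposition through the error triangle rather than work with it directly. The crucial flexibility is that $R$ can be taken arbitrarily large, which pushes $\mathrm{Err}_R$ into arbitrarily high cohomological degree while $B$ remains in $[0,\dim W]$; this makes the relevant Ext groups vanish and the lifting argument go through. Part (2) is essentially formal once the bounded codimension drop under base change is tracked.
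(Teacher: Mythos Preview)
Your argument is correct and matches the paper's approach: for (1) the paper invokes \cite[Section 3.4]{MS2}, which is exactly the strategy you outline (approximate $Rq_*\BC$ by the projective $q_R$, bound the error term in high cohomological degree via the codimension hypothesis, and split off $\mathrm{IC}_W[-\dim W]$ using the decomposition theorem for $q_R$), and for (2) the paper simply says to pull back the approximation diagram along $f$. One small caveat on your codimension estimate in (2): the drop in $\mathrm{codim}(\CZ_R)$ under base change along $f$ is bounded not by the fibre dimension $\Delta$ alone but in general by $\Delta + (\dim\CW - \dim\CW')$; this does not affect your conclusion, since the drop is still a constant independent of $R$ and you may enlarge $R_0$ accordingly.
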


\begin{proof}
(1) follows from \cite[Section 3.4]{MS2}. In fact, although \cite[Proposition 3.4]{MS2} concerns a more specific geometry, the proof only relies on the diagram (\ref{diagram}) and the properties (a-d) above. More precisely, we view the complex 
\[
Rq_* \BC = R(q \circ p_\CX)_* \BC
\]
as a homotopy colimit of truncations of the direct image complexes $Rq_{R*} \BC$, and use the decomposition theorem for the projective morphism $q_R: W_R \to W$ to deduce the desired splitting (\ref{splitting}). 

(2) is deduced by pulling back the diagram (\ref{diagram}) along $f: W' \to W$. 
\end{proof}

\subsection{Connnecting to $\mathrm{GL}_r$-Hitchin fibrations}

Recall the Hitchin fibration $h_\pi: M_{r,L}(\pi) \to A(\pi)$ associated with $\pi: C' \to C$ with relative dimension
\[
e = \mathrm{dim}M_{r,L}(\pi) - \mathrm{dim}A(\pi).
\]
To verify the relative dimension bound (\ref{truncation}) for $M_{r,L}(\pi)$, we consider the stack $\CM_{r,L}(\pi)$ of semistable Higgs bundles $(\CE, \theta)$ with $\mathrm{det}(\pi_*\CE)\simeq L \in \mathrm{Pic}^d(C)$ and $\mathrm{trace}( \pi_*\theta) = 0$. We denote by $q:\CM_{r,L}(\pi) \to M_{r,L}(\pi)$ the map from the stack to the good moduli space.

For our purpose, we also consider the $\mathrm{GL}_r$-Hitchin fibration $\widetilde{h}: \widetilde{M}'_{r,d} \to \widetilde{A}'$ associated with the curve $C'$. Here $\widetilde{M}'_{r,d}$ is the moduli space of semistable Higgs bundles 
\[
(\CE, \theta), \quad \quad \theta: \CE \to \CE\otimes\CO_{C'}(D'), \quad \quad D' = \pi^*D
\]
of rank $r$ and degree $d$ on $C'$, and $\widetilde{h}$ is the Hitchin fibration sending $(\CE, \theta)$ to its characteristic polynomial 
\[
\mathrm{char}(\theta) \in \widetilde{A}' = \oplus_{i=1}^r H^0(C', \CO_{C'}(iD')). 
\]
We denote by $\widetilde{\CM}'_{r,d}$ the corresponding moduli stack with the natural morphism $\widetilde{q}: \widetilde{\CM}'_{r,d} \to  \widetilde{M}'_{r,d}$. We recall the following proposition from \cite{MS2} concerning $\widetilde{\CM}'_{r,d}$.

\begin{prop}[\cite{MS2} Proposition 2.9 (2) and Proposition 3.6] \label{Prop3.3}
The stack $\widetilde{\CM}'_{r,d}$ is nonsingular, and $\widetilde{q}: \widetilde{\CM}'_{r,d} \to \widetilde{M}'_{r,d}$ has a proper approximation.
\end{prop}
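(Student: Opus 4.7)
The claim has two parts, and my plan is to treat each by invoking and adapting the cited results in \cite{MS2}.

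For nonsingularity of $\widetilde{\CM}'_{r,d}$, I would apply the standard deformation theory for $\CO_{C'}(D')$-twisted Higgs pairs. At a point $(\CE, \theta)$, the tangent and obstruction spaces are the hypercohomology groups $\BH^1$ and $\BH^2$ of the two-term complex
$$\mathrm{End}(\CE) \xrightarrow{[\theta,\,-]} \mathrm{End}(\CE)\otimes \CO_{C'}(D').$$
By Serre duality on $C'$, $\BH^2$ is dual to $\BH^{-1}$ of the Serre-twisted complex involving $K_{C'}(-D')$. Since $\pi$ is \'etale of degree $m$, Riemann--Hurwitz gives $2g(C')-2 = m(2g-2)$, and the hypothesis $\deg(D) > 2g-2$ forces $\deg K_{C'}(-D') < 0$. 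The corresponding hypercohomology vanishes at every closed point, so obstructions are zero and the stack is smooth of the expected dimension.

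For the proper approximation, I would follow the Totaro--Edidin--Graham-style construction of \cite[Proposition 3.6]{MS2}. The first step is to present the semistable substack as a global quotient $[R/\mathrm{GL}_N]$ of a smooth scheme $R$ of framed semistable Higgs bundles: for a sufficiently large twist $k$, every semistable Higgs bundle $\CE$ on $C'$ satisfies $H^1(C', \CE(k)) = 0$ and is globally generated with $h^0(\CE(k)) = N$, so choosing an isomorphism $H^0(\CE(k)) \simeq \BC^N$ yields the desired presentation with good GIT quotient $\widetilde{M}'_{r,d}$. Given a threshold $R > 0$, I would pick $k \gg R$ and let $V_k = \Hom(\BC^N, \BC^{N+k})$ with open locus $U_k \subset V_k$ of injective linear maps, on which $\mathrm{GL}_N$ acts freely. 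Setting
$$\CX_R := [R \times V_k / \mathrm{GL}_N], \qquad W_R := (R \times U_k)/\mathrm{GL}_N,$$
the projection $\CX_R \to \widetilde{\CM}'_{r,d}$ is the vector bundle pulled back from $V_k \to [\mathrm{pt}/\mathrm{GL}_N]$, hence an affine-space bundle; $W_R \hookrightarrow \CX_R$ is open with complement of codimension $k+1 > R$; and $W_R$ is smooth because the $\mathrm{GL}_N$-action on the $U_k$-factor is free.

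The hardest step will be verifying projectivity of $q_R : W_R \to \widetilde{M}'_{r,d}$: because strictly semistable Higgs bundles have nontrivial automorphisms, the $\mathrm{GL}_N$-stabilizers on $R$ are nontrivial and $W_R$ is not literally a Grassmannian bundle over $\widetilde{M}'_{r,d}$. The resolution, as carried out in \cite[Section 3]{MS2}, is that the effective action of $\mathrm{PGL}_N$ on $R \times U_k$ is free on the stable locus and has only finite stabilizers overall, so the induced map on GIT quotients is proper; combined with quasi-projectivity of $W_R$ over the affine Hitchin base $\widetilde{A}'$, this yields projectivity of $q_R$ and completes the verification of the conditions (a)--(d) of a proper approximation.
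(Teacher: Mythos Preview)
The paper imports this result from \cite{MS2} without proof, so let me compare your sketch to what that argument actually requires.

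Your smoothness argument has the right shape but skips a step. The Serre-dual of the obstruction group is the space of Higgs morphisms $(\CE,\theta)\to(\CE\otimes K_{C'}(-D'),\theta)$, and its vanishing uses the \emph{Higgs}-semistability of $(\CE,\theta)$ together with the strict drop in slope, not merely $\deg K_{C'}(-D')<0$: the underlying bundle $\mathrm{End}(\CE)$ is typically not semistable, so negativity of the twist alone does not kill $H^0$.

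Your proper-approximation construction has a genuine gap in condition (c). The centre $\BC^*\subset\mathrm{GL}_N$ acts \emph{trivially} on $R$ --- a scalar $t$ sends a Quot-point $[q:\CO^N(-k)\twoheadrightarrow F]$ to $[t^{-1}q]$, which has the same kernel and hence is the same point --- but acts by scaling on $V_k=\Hom(\BC^N,\BC^{N+k})$. Consequently, over any stable $[F]\in\widetilde M'_{r,d}$, where the $\mathrm{GL}_N$-orbit in $R$ has stabilizer exactly this $\BC^*$, the fibre of your $q_R$ is
\[
\bigl((\mathrm{GL}_N/\BC^*)\times U_k\bigr)\big/\mathrm{GL}_N\ \cong\ U_k/\BC^*,
\]
the locus of full-rank points inside $\BP(V_k)$. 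For $N\ge 2$ this is a strict open subset of projective space and is not compact, so $q_R$ is not proper. Your proposed $\mathrm{PGL}_N$ rescue does not make sense as written: since the centre acts nontrivially on $U_k$, there is no $\mathrm{PGL}_N$-action on $R\times U_k$ at all, and in any event freeness of a group action says nothing about properness of the induced map to $R\sslash\mathrm{GL}_N$.

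The underlying problem is that you have taken $W_R$ too small. For $W_R\subset\CX_R$ to be a scheme one only needs $\mathrm{Stab}_{\mathrm{GL}_N}(r)\cap\mathrm{Stab}_{\mathrm{GL}_N}(v)=\{e\}$, not $\mathrm{Stab}_{\mathrm{GL}_N}(v)=\{e\}$. With the larger $W_R$ the fibre over a stable $[F]$ becomes $(V_k\setminus\{0\})/\BC^*=\BP(V_k)$, and over a polystable $[F]$ one gets (on the stacky fibre) a product of projective spaces; the complement still has large codimension for $k\gg0$. Proving that $q_R$ is globally projective for this corrected $W_R$ is exactly where the substantive argument of \cite[\S3]{MS2} lies, and it is not the one-line stabilizer remark you give.
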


Now we connect the moduli spaces and stacks for the endoscopic groups and $\mathrm{GL}_r$ via the construction of \cite[Section 5]{MS}.

We consider the moduli space $\widetilde{M}_{1,0}$ (resp. moduli stack $\widetilde{\CM}_{1,0}$) of Higgs bundles on $C$ with rank 1 and degree 0. More concretely, they can be described as:
\[
\widetilde{M}_{1,0} = \mathrm{Pic}^0(C)\times H^0(C, \CO_C(D)), \quad \widetilde{\CM}_{1,0} = {\CP}ic^0(C)\times H^0(C, \CO_C(D))
\]
where $\mathrm{Pic}^0(-)$ and ${\CP}ic^0(-)$ stand for the degree 0 Picard scheme and stack respectively. We denote by 
\[
q_P: \widetilde{\CM}_{1,0} \to \widetilde{M}_{1,0}
\] 
the natural morphism. The group scheme $ \widetilde{M}_{1,0}$ acts on $\widetilde{M}'_{r,d}$:
\[
(\CL, \sigma) \cdot (\CE, \theta) = (\pi^*\CL \otimes \CE, \pi^*\sigma + \theta), \quad \quad (\CL, \sigma) \in \widetilde{M}_{1,0}, \quad (\CE, \theta) \in \widetilde{M}'_{r,d}
\]
which induces a morphism
\[
t: \widetilde{M}_{1,0} \times M_{r,L}(\pi) \to \widetilde{M}'_{r,d}
\]
by restricting the action to $M_{r,L}(\pi) \subset \widetilde{M}'_{r,d}$. The map $t$ can be interpreted as the quotient map by the finite group $\Gamma = \mathrm{Pic}^0(C)[n]$ acting diagonally on the two factors; see \cite[Section 5.3]{MS}. Similarly, we have the $\Gamma$-quotient map for the moduli stacks: 
\[
 \widetilde{\CM}_{1,0} \times \CM_{r,L}(\pi) \to \widetilde{\CM}'_{r,d}
\]
inducing the following Cartesian diagram
\begin{equation}\label{BC}
\begin{tikzcd}
\widetilde{\CM}_{1,0} \times \CM_{r,L}(\pi) \arrow[r] \arrow[d]
& \widetilde{\CM}'_{r,d} \arrow[d] \\
\widetilde{M}_{1,0} \times M_{r,L}(\pi) \arrow[r, "t"]
& \widetilde{M}'_{r,d}
\end{tikzcd}
\end{equation}
where the horizontal arrows are quotient maps by the $\Gamma$-actions and the vertical arrows are the maps from the stacks to the good moduli spaces.

\begin{prop}\label{Prop3.4}
The moduli stack $\CM_{r,L}(\pi)$ is nonsingular, and the left vertical map of (\ref{BC})
\[
g:= q_P \times q: \widetilde{\CM}_{1,0} \times \CM_{r,L}(\pi)  \to \widetilde{M}_{1,0} \times M_{r,L}(\pi) 
\]
has a proper approximation.
\end{prop}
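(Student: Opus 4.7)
The plan is to deduce both claims directly from the Cartesian square (\ref{BC}), combined with Propositions \ref{Prop3.2} and \ref{Prop3.3}; no independent analysis of the singularities of $\CM_{r,L}(\pi)$ or construction of an approximation from scratch should be necessary.

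First I would establish nonsingularity. The horizontal arrows in (\ref{BC}) are quotient morphisms by the finite group $\Gamma = \mathrm{Pic}^0(C)[n]$. The key observation is that, at the stack level, the diagonal $\Gamma$-action on $\widetilde{\CM}_{1,0} \times \CM_{r,L}(\pi)$ is \emph{free}: already on the first factor $\widetilde{\CM}_{1,0} = {\CP}ic^0(C) \times H^0(C, \CO_C(D))$, the group $\Gamma \subset \mathrm{Pic}^0(C)$ acts by translation on ${\CP}ic^0(C)$, and translation by a nonzero element has no fixed points. Hence the top row of (\ref{BC}) is a finite étale Galois cover. Since $\widetilde{\CM}'_{r,d}$ is nonsingular by Proposition \ref{Prop3.3}, so is $\widetilde{\CM}_{1,0} \times \CM_{r,L}(\pi)$, and from the evident nonsingularity of $\widetilde{\CM}_{1,0}$ it follows that $\CM_{r,L}(\pi)$ is nonsingular.

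Next I would treat the proper approximation of $g$. Proposition \ref{Prop3.3} supplies a proper approximation of $\widetilde{q}: \widetilde{\CM}'_{r,d} \to \widetilde{M}'_{r,d}$, and the Cartesian square (\ref{BC}) exhibits $g$ as the pullback of $\widetilde{q}$ along $t: \widetilde{M}_{1,0} \times M_{r,L}(\pi) \to \widetilde{M}'_{r,d}$. Since the total space $\widetilde{\CM}_{1,0} \times \CM_{r,L}(\pi)$ of this pullback is a nonsingular Artin stack by the previous step, the hypotheses of Proposition \ref{Prop3.2}(2) are met, and the proper approximation of $\widetilde{q}$ transports to one for $g$.

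The main technical point is verifying freeness of the $\Gamma$-action at the stack level—a stronger statement than the corresponding freeness on the coarse moduli spaces, which typically fails along the strictly semistable locus. This is the only place where the argument uses anything beyond the diagrammatic properties of (\ref{BC}); once it is confirmed, the remainder is a purely formal consequence of Propositions \ref{Prop3.2} and \ref{Prop3.3}.
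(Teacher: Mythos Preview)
Your argument is correct, and for the proper approximation of $g$ it is identical to the paper's: both invoke Proposition~\ref{Prop3.2}(2) together with Proposition~\ref{Prop3.3} and the Cartesian square~(\ref{BC}).

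For the nonsingularity of $\CM_{r,L}(\pi)$, however, you take a genuinely different route. The paper argues intrinsically via deformation theory: the obstruction space at $(\CE,\theta)\in\CM_{r,L}(\pi)$ is identified with $H^2$ of the trace-free complex $\big[(\pi_*\CE nd(\CE))_0 \xrightarrow{\pi_*\mathrm{ad}(\theta)} (\pi_*\CE nd(\CE))_0\otimes\CO_C(D)\big]$, which embeds into the corresponding obstruction space for $(\CE,\theta)$ regarded as a point of $\widetilde{\CM}'_{r,d}$; the latter vanishes by the smoothness of $\widetilde{\CM}'_{r,d}$. Your approach instead exploits the Cartesian diagram directly: since $\Gamma$ acts freely already on the factor $\widetilde{\CM}_{1,0}$ (translation on ${\CP}ic^0(C)$ by a nontrivial $n$-torsion line bundle has no fixed points, even stack-theoretically), the top horizontal arrow in~(\ref{BC}) is finite \'etale, and smoothness of $\widetilde{\CM}'_{r,d}$ pulls back. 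This is more elementary---no obstruction computation is needed---at the cost of relying on the Cartesian property of~(\ref{BC}) and the identification of the top arrow with a free $\Gamma$-quotient, both of which are imported from \cite{MS}. The paper's argument, by contrast, establishes smoothness of $\CM_{r,L}(\pi)$ on its own terms, independent of the product diagram.
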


\begin{proof}
By the discussion in the proof of \cite[Proposition 4.1]{MS}, the obstruction space for an element $(\CE, \theta) \in \CM_{r,L}(\pi)$ is the second cohomology group of the following complex
\[
 \left[(\pi_*\CE{nd}(\CE))_0 \xrightarrow{\pi_*\mathrm{ad}(\theta)} (\pi_*\CE{nd}(\CE))_0\otimes \CO_C(D)\right]
\]
obtained by removing the trace from the pushforward of the complex
\begin{equation}\label{obstruction}
\left[\CE{nd}(\CE) \xrightarrow{\mathrm{ad}(\theta)} \CE{nd}(\CE)\otimes \CO_{C'}(D')\right].
\end{equation}
Here $(\pi_*\CE{nd}(\CE))_0$ denotes the kernel with respect to the trace on the curve $C$:
\[
\mathrm{tr}_C: \pi_* \CE{nd}(\CE) \xrightarrow{\pi_*\mathrm{tr}_{C'}} \pi_*\CO_{C'} \to \CO_C
\]
In particular, the obstruction space for $(\CE, \theta) \in \CM_{r,L}(\pi)$ is a subspace of the second cohomology group of (\ref{obstruction}) on $C'$ which is actually the obstruction space for $(\CE, \theta) \in \widetilde{\CM}'_{r,d}$ by viewing $(\CE, \theta)$ as a $\mathrm{GL}_r$-Higgs bundle on $C'$. Its vanishing follows from the (the proof of) Proposition \ref{Prop3.3} on the smoothness of $\widetilde{\CM}'_{r,d}$. This shows that $\CM_{r,L}(\pi)$ is nonsingular.

Consequently, we obtain the smoothness of $\widetilde{\CM}_{1,0} \times \CM_{r,L}(\pi)$. The second part is a corollary of Proposition \ref{Prop3.2} (2) and Proposition \ref{Prop3.3}.
\end{proof}

By Propositions \ref{Prop3.2} (1) and \ref{Prop3.4}, we get the following result.

\begin{cor}\label{Cor3.5}
We have a splitting
\begin{equation}\label{eqn15}
Rg_* \BC \simeq \mathrm{IC}_{\widetilde{M}_{1,0} \times M_{r,L}(\pi) }[-\mathrm{dim}\widetilde{M}_{1,0} -\mathrm{dim}M_{r,L}(\pi)]  \oplus \CK
\end{equation}
with $\CK$ some complex bounded from below.
\end{cor}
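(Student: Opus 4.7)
The plan is to apply Proposition \ref{Prop3.2}(1) directly to the morphism $g = q_P \times q$ supplied by Proposition \ref{Prop3.4}. Since Proposition \ref{Prop3.2}(1) takes as input any morphism from a nonsingular Artin stack of finite type to an algebraic variety that admits a proper approximation, and returns exactly a splitting of the form (\ref{splitting}), the corollary reduces to verifying these hypotheses for $g$.

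First I would check the smoothness of the source. Nonsingularity of $\CM_{r,L}(\pi)$ is the first half of Proposition \ref{Prop3.4}. The factor $\widetilde{\CM}_{1,0} = {\CP}ic^0(C) \times H^0(C, \CO_C(D))$ is a product of a smooth Picard stack with an affine space, hence smooth; smoothness of the product $\widetilde{\CM}_{1,0} \times \CM_{r,L}(\pi)$ is immediate. The target $\widetilde{M}_{1,0} \times M_{r,L}(\pi)$ is an algebraic variety, and the second half of Proposition \ref{Prop3.4} supplies a proper approximation for $g$.

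Applying Proposition \ref{Prop3.2}(1) then yields a splitting
\[
Rg_* \BC \simeq \mathrm{IC}_{\widetilde{M}_{1,0} \times M_{r,L}(\pi)}\bigl[-\mathrm{dim}(\widetilde{M}_{1,0} \times M_{r,L}(\pi))\bigr] \oplus \CK
\]
in $D_c^+(\widetilde{M}_{1,0} \times M_{r,L}(\pi))$, with $\CK$ bounded from below. Using the additivity of dimension, $\mathrm{dim}(\widetilde{M}_{1,0} \times M_{r,L}(\pi)) = \mathrm{dim}\widetilde{M}_{1,0} + \mathrm{dim}M_{r,L}(\pi)$, this matches the shift stated in (\ref{eqn15}). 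There is no genuine obstacle to overcome here: the two preceding propositions were assembled precisely so that this statement follows by quotation, so the proof amounts to a one-line invocation once the compatibility of hypotheses is recorded.
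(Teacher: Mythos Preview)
Your proposal is correct and matches the paper's approach exactly: the paper simply states that the corollary follows from Propositions \ref{Prop3.2}(1) and \ref{Prop3.4}, which is precisely the one-line invocation you describe. Your extra verification that the source stack is smooth and the target is a variety just makes explicit what the paper leaves implicit.
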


\subsection{Proof of Theorem \ref{thm1.1}}\label{Sec2.4}
We verify (\ref{truncation}) in this section which completes the proof of Theorem \ref{thm1.1}. For convenience, we use the following simplified notation (only) in Section \ref{Sec2.4}:
\[
\begin{split}
    H:= \widetilde{M}_{1,0} ,\quad  M:= M_{r,L}(\pi),\quad  \widetilde{M}':= 
\widetilde{M}'_{r,d}, \\ \CH:= \widetilde{\CM}_{1,0}, \quad \CM:= \CM_{r,L}(\pi), \quad  \widetilde{\CM}':= \widetilde{\CM}'_{r,d}.
\end{split}
\]

\medskip
\noindent {\bf Fact 1.} For the morphism $q: \CM \to M$, we have a splitting
\[
Rq_* \BC \simeq \mathrm{IC}_M[-\dim M] \oplus \CK' .
\]

\begin{proof}[Proof of Fact 1.]
Since $H$ is nonsingular, we have
\[
\mathrm{IC}_{H\times M} \simeq \BC_H[\mathrm{dim}H] \boxtimes \mathrm{IC}_M.
\]
On the other hand, the lefthand side of (\ref{eqn15}) is equal to
\[
Rg_* \BC  = \bigoplus_{i\geq 0}\BC_H \boxtimes Rq_* \BC_{M} [-2i]. 
\]
Hence by restricting (\ref{eqn15}) to $\mathrm{pt}\times M \subset H \times M$, we obtain that
\[
\bigoplus_{i\geq 0 } Rq_* \BC_M [-2i]\simeq \mathrm{IC}_M[-\mathrm{dim}M] \oplus \cdots \in D^+_c(M).
\]
Since $\mathrm{IC}_M[-\mathrm{dim}M]$ is simple, it has to be a direct summand component of some $Rq_*\BC_M[-2k]$. By comparing over the nonsingular locus of $M$, we see that $k=0$. 
\end{proof}

\medskip
\noindent {\bf Fact 2.} Let $h_\CM: \CM \to A(\pi)$ be the composition 
\[
h_\CM: \CM \xrightarrow{q} M \xrightarrow{h_\pi} A(\pi).
\]
Then we have 
\[
\tau_{>2e}\left(Rh_{\CM!} \BC_\CM\right) =0, \quad \quad e = \mathrm{dim}M-\mathrm{dim}A(\pi) = \mathrm{dim}\CM-\mathrm{dim}A(\pi)+1.
\]
\begin{proof}[Proof of Fact 2]

We consider the map $h_{\widetilde{\CM}'} :  \widetilde{\CM}' \to \widetilde{A}'$ given as the composition
\[
h_{\widetilde{\CM}'}= \widetilde{h}\circ \widetilde{q}: \widetilde{\CM}' \to \widetilde{M}' \to \widetilde{A}'.
\]

By \cite[Proposition 2.9 (1)]{MS2} (see also \cite[Section 10]{CL}) we have the dimension bound for any closed fiber:
\[
\mathrm{dim}h^{-1}_{\widetilde{\CM}'}(a) \leq   \mathrm{dim}\widetilde{\CM}'-\mathrm{dim}\widetilde{A}' = e +(g-1), \quad \forall a\in \widetilde{A}'.
\]
Hence, for the morphism $h_{\CH\times\CM}: \CH \times \CM \to H^0(C, \CO_C(D)) \times A(\pi)$ given by the composition
\[
h_{\CH\times\CM}: \CH \times \CM \to H \times M \to H^0(C, \CO_C(D)) \times A(\pi), 
\]
we obtain from the diagram (\ref{BC}) that
\[
\mathrm{dim}h_{\CH\times\CM}^{-1}(w,s)   =   \mathrm{dim}h^{-1}_{\widetilde{\CM}'}\left(t(w,s)\right) \leq   e+ (g-1),  \quad \quad \forall (w,s)\in H^0(C, \CO_C(D))\times A(\pi).
\]
On the other hand,
\[
\mathrm{dim}h_{\CH\times\CM}^{-1}(t,s) = \mathrm{dim}h_\CM^{-1}(s) +(g-1).
\]
Consequently $\mathrm{dim}h_\CM^{-1}(s) \leq e$ for any closed point $s\in A(\pi)$. Fact 2 follows from \cite[Lemma 3.5]{MS2} and base change.
\end{proof}

As explained in the paragraph following \cite[Proposition 3.4]{MS}, Facts 1 and 2 imply the relative dimension bound (\ref{truncation}) immediately. This completes the proof of Theorem \ref{thm1.1}. \qed

\section{The Hausel--Thaddeus conjecture}

\subsection{Overview}
We complete the proof of Theorem \ref{thm0.2} in this section. As a consequence of Theorem \ref{thm1.1}, we first show that both sides of (\ref{thm0.2_a}) are semisimple objects with $A_\gamma$ as the only support. Then Theorem \ref{thm0.2} (a) is reduced to showing the desired isomorphism over an arbitrary Zariski open subset of the locus $A_\gamma \subset A$. This is essentially identical to the proof of \cite[Theorem 3.2]{MS} which only relies on the calculation over the elliptic locus \cite{Ngo, Yun3}.

Theorem \ref{thm0.2} (b) is more complicated, since this is a new phenomenon when $\mathrm{gcd}(n,d)\neq 1$.\footnote{When $\mathrm{gcd}(n,d) = \mathrm{gcd}(n,d')=1$, the condition (\ref{condition}) specializes to the condition that $\kappa' = d'^{-1}d\kappa$ as in \cite[Theorem 0.5]{MS}.} Again, we use the support theorem to reduce the desired isomorphism to a calculation of the $G_\pi$-action on the $m$ components of the moduli space $M_{r,L}(\pi)$. This is carried out in Section \ref{Sec3.5}.

In Section \ref{HT_conj}, we further discuss the connection between Theorem \ref{thm0.2} and the original formulation of the Hausel--Thaddeus conjecture \cite{HT}.

\subsection{Supports for $h: M_{n,L} \to A$}
Recall the $\mathrm{SL}_n$-Hitchin fibration $h: M_{n,L} \to A$, and the elliptic locus $A^{\mathrm{ell}} \subset A$ which is the open subset of $A$ consisting of integral spectral curves. The fiberwise $\Gamma$-action on $M_{n,L}$ yields the canonical decomposition 
\begin{equation*}
{Rh}_* \mathrm{IC}_{M_{n,L}}  = \bigoplus_{\kappa} \left({Rh}_* \mathrm{IC}_{M_{n,L}}\right)_\kappa , \quad \kappa \in \hat{\Gamma}.
\end{equation*}
Let $\gamma \in \Gamma$ be the element matched with the nontrivial character $\kappa  \in \hat{\Gamma}$ via the Weil pairing (\ref{Weil_Pairing}). Ng\^o proved in \cite[Theorem 7.8.5]{Ngo} that the restriction of the object 
\begin{equation}\label{kappa_object}
\left({Rh}_*\mathrm{IC}_{M_{n,L}}\right)_\kappa 
\end{equation}
to $A^{\mathrm{ell}}$ has 
\[
A_\gamma^{\mathrm{ell}}: = A_\gamma \cap A^{\mathrm{ell}}  \subset A
\]
as its only support. Hence we obtain the following proposition concerning the lefthand side of (\ref{thm0.2_a}) from Theorem \ref{thm0.1}:

\begin{prop}\label{prop3.1}
We have that $A_\gamma$ is the only support of the object (\ref{kappa_object}).
\end{prop}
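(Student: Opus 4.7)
The plan is to bootstrap from Ng\^o's calculation on the elliptic locus to all of $A$ using the global support constraint provided by Theorem \ref{thm0.1}. Since $\left({Rh}_* \mathrm{IC}_{M_{n,L}}\right)_\kappa$ is a direct summand of ${Rh}_* \mathrm{IC}_{M_{n,L}}$, any support $Z$ of the $\kappa$-eigensummand is automatically a support of the full direct image, so by Theorem \ref{thm0.1} the generic point of $Z$ lies in $A^{\mathrm{ell}}$. Consequently $Z \cap A^{\mathrm{ell}}$ is a nonempty open dense subvariety of $Z$, and the simple perverse summand $\mathrm{IC}_Z(\CL)$ sitting over $Z$ restricts to a nonzero simple summand of $\left({Rh}_* \mathrm{IC}_{M_{n,L}}\right)_\kappa |_{A^{\mathrm{ell}}}$ supported on $Z \cap A^{\mathrm{ell}}$.

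Next I would invoke the Ng\^o calculation \cite[Theorem 7.8.5]{Ngo} already recalled just before the proposition: on $A^{\mathrm{ell}}$ the $\kappa$-isotypic piece has $A^{\mathrm{ell}}_\gamma = A_\gamma \cap A^{\mathrm{ell}}$ as its only support. This forces $Z \cap A^{\mathrm{ell}}$ to coincide with $A^{\mathrm{ell}}_\gamma$ (or one of its irreducible components, handled component by component), and passing to closures in $A$ yields $Z \subseteq \overline{A^{\mathrm{ell}}_\gamma} \subseteq A_\gamma$. The remaining identification $\overline{A^{\mathrm{ell}}_\gamma} = A_\gamma$ — equivalently, the density of $A^{\mathrm{ell}}_\gamma$ in $A_\gamma$ — is a general feature of endoscopic Hitchin bases: a generic element of $A_\gamma$ comes from the endoscopic Hitchin base associated with the cyclic \'etale cover $\pi: C'\to C$ determined by $\gamma$, and its spectral curve can be chosen integral. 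For the converse direction — that $A_\gamma$ actually occurs as a support — one simply notes that the restriction to $A^{\mathrm{ell}}$ is nonzero with support $A^{\mathrm{ell}}_\gamma$ by Ng\^o, so some simple summand on $A$ must have $A_\gamma$ as its closed support. The case $\gamma = 0$ (so $\kappa$ trivial) is automatic since then $A_\gamma = A$.

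The substantive input is supplied entirely by Theorem \ref{thm0.1} together with Ng\^o's theorem; no new geometric argument is needed. The only mild obstacle is the bookkeeping around possible reducibility of $A_\gamma$ or $A^{\mathrm{ell}}_\gamma$: one either checks that the endoscopic Hitchin base attached to $\gamma$ is irreducible, or phrases the statement and argument in terms of irreducible components. Once this is handled, matching supports of the global decomposition with those of its restriction to $A^{\mathrm{ell}}$ is a one-line consequence of the BBD compatibility with open restriction.
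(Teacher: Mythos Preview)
Your proposal is correct and follows exactly the paper's approach: combine Theorem~\ref{thm0.1} (every support has generic point in $A^{\mathrm{ell}}$) with Ng\^o's result that over $A^{\mathrm{ell}}$ the $\kappa$-isotypic summand has $A^{\mathrm{ell}}_\gamma$ as its only support, then take closures. The paper's proof is in fact just the short paragraph immediately preceding the proposition; you have simply spelled out the closure argument, the existence direction, and the irreducibility bookkeeping that the paper leaves implicit.
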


\subsection{The moduli spaces $M_{r,L}(\pi)$ and $M^\gamma_{n,L}$}
Now we prove a support theorem for the fibration $h_\gamma: M^\gamma_{n,L} \to A_\gamma$ concerning the object in the righthand side of (\ref{thm0.2_a}). We achieve this using the moduli space $M_{r,L}(\pi)$ discussed in Sections \ref{sec1} and \ref{Sec2}.

Assume $\kappa$ has order $m$ in $\hat{\Gamma}$. Therefore $\gamma$ is an $m$-torsion line bundle. Let $\pi: C' \to C$ be the degree $m$ cyclic \'etale Galois cover associated with $\gamma$ \cite[Section 1.3]{MS}. In the following, we construct the commutative diagram
\begin{equation}\label{diagram111}
\begin{tikzcd}
M_{r,L}(\pi) \arrow[r, "q_M"] \arrow[d, "h_{\pi}"]
& M_\gamma \arrow[d, "h_\gamma"] & \\
A{(\pi)} \arrow[r, "q_A"]
& A_\gamma 
\end{tikzcd}
\end{equation}
connecting $h_\pi$ and $h_\gamma$, where the bottom horizontal map $q_A$ is the $G_\pi$-quotient; see \cite[Section 1.5]{MS} for the coprime case. Note that the map $q_M$ is the free $G_\pi$-quotient in the coprime case, but it is more complicated in general without the coprime assumption (Remark \ref{rmk1}).

We first review the construction of \cite[Section 7]{HT} which gives the top horizontal map $q_M$. Let $(\CE, \theta)$ be a rank $r$ Higgs bundle on the curve $C'$, then $(\pi_* \CE, \pi_*\theta)$ is a rank $n (= rm)$ Higgs bundle on $C$. Here the bundle $\pi_*\CE$ is simply the pushforward of $\CE$ along $\pi: C' \to C$, and the Higgs field $\theta$ is given by descending the block-diagonal Higgs field $\bigoplus_{g\in G_\pi} g^*\theta$ on the vector bundle
\begin{equation}\label{eqn17}
\pi^*\pi_*\CE. = \bigoplus_{g\in G_\pi} g^*\CE
\end{equation}
along the $G_\pi$-quotient $\pi: C'\to C$. We recall the following well-known lemma.

\begin{lem}\label{lem3.2}  The Higgs bundle $(\CE, \theta)$ is semistable if and only if $(\pi_*\CE , \pi_*\theta)$ is semistable.
\end{lem}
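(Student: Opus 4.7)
The plan is to prove both implications using basic functoriality of pushforward and pullback along the \'etale cover $\pi: C' \to C$ together with slope bookkeeping. I will first record the numerical facts, which are particularly clean because $\pi$ is \'etale of degree $m$: for any coherent sheaf $\CF$ on $C'$ one has $\mathrm{rank}(\pi_*\CF) = m\cdot \mathrm{rank}(\CF)$, and by Riemann--Roch combined with the \'etale Riemann--Hurwitz relation $1 - g(C') = m(1 - g(C))$ one gets $\deg(\pi_*\CF) = \deg(\CF)$. Hence $\mu(\pi_*\CF) = \mu(\CF)/m$, and dually $\mu(\pi^*\CG) = m\,\mu(\CG)$ for sheaves $\CG$ on $C$. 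These identities reduce both directions to direct slope comparisons.

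For the ``$\Leftarrow$'' direction, suppose $(\pi_*\CE, \pi_*\theta)$ is semistable and let $\CE' \subset \CE$ be a $\theta$-invariant subsheaf. Since $\pi$ is finite, $\pi_*$ is exact and commutes with tensoring by $\CO_C(D)$ via the projection formula, so $\pi_*\CE' \subset \pi_*\CE$ is a $\pi_*\theta$-invariant subsheaf. Semistability gives $\mu(\pi_*\CE') \leq \mu(\pi_*\CE)$, and applying the identity $\mu(\pi_*(-)) = \mu(-)/m$ to both sides yields $\mu(\CE') \leq \mu(\CE)$.

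For the ``$\Rightarrow$'' direction, suppose $(\CE, \theta)$ is semistable and let $\CF \subset \pi_*\CE$ be a $\pi_*\theta$-invariant subsheaf. Pulling back, I obtain $\pi^*\CF \subset \pi^*\pi_*\CE = \bigoplus_{g \in G_\pi} g^*\CE$, where the decomposition (\ref{eqn17}) arises from the $G_\pi$-Galois structure; under this identification the pulled-back Higgs field is the diagonal $\bigoplus_g g^*\theta$, so $\pi^*\CF$ is preserved. The key observation is that each $(g^*\CE, g^*\theta)$ is semistable of slope $\mu(\CE)$, since $G_\pi$ acts on $C'$ by automorphisms; and semistable Higgs bundles of the same slope are closed under direct sum. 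Thus $(\bigoplus_g g^*\CE, \bigoplus_g g^*\theta)$ is semistable of slope $\mu(\CE)$, giving $\mu(\pi^*\CF) \leq \mu(\CE)$. Dividing by $m$ produces $\mu(\CF) \leq \mu(\pi_*\CE)$, as needed.

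The argument is essentially formal and presents no real obstacle, but two minor points deserve attention. First, identifying $\pi^*(\pi_*\theta)$ with the diagonal $\bigoplus_g g^*\theta$ under (\ref{eqn17}) requires a brief check via the $G_\pi$-equivariant descent used to construct $\pi_*\theta$ in the first place (together with the projection formula). Second, the closure of semistable Higgs bundles of fixed slope under direct sums, while classical, is precisely the ingredient that would fail for strict stability --- which is why the lemma is phrased only in the semistable case.
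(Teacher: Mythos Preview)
Your proof is correct and follows essentially the same route as the paper: the ``if'' direction pushes forward a destabilizing sub-Higgs bundle, and the ``only if'' direction pulls back to the decomposition $\pi^*\pi_*(\CE,\theta)=\bigoplus_{g\in G_\pi} g^*(\CE,\theta)$ and uses that a direct sum of semistable Higgs bundles of the same slope is semistable. Your additional explicit slope identities $\mu(\pi_*\CF)=\mu(\CF)/m$ and $\mu(\pi^*\CG)=m\,\mu(\CG)$ simply make precise what the paper leaves implicit.
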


\begin{proof}
The \emph{if} part is obvious: for any sub-Higgs bundle destabilizing $(\CE, \theta)$, its pushforward along $\pi$ will destabilize $(\pi_*\CE, \pi_*\theta)$. For the \emph{only if} part, we consider the decomposition (\ref{eqn17}):
\begin{equation}\label{eqn18}
\pi^*\pi_*(\CE, \theta) = \bigoplus_{g\in G_\pi} g^*(\CE, \theta).
\end{equation}
In particular, if $(\CE,\theta)$ is semistable, then (\ref{eqn18}) as a direct summand of semistable Higgs bundles of the same slope is also semistable. Hence the pullback of any sub-Higgs bundle destabilizing $(\pi_*\CE, \pi_*\theta)$ will destabilize (\ref{eqn18}) as well. This completes the proof.
\end{proof}

By Lemma \ref{lem3.2}, the push forward $\pi_*$ induces a morphism between the moduli spaces
\begin{equation}\label{eqn19}
M_{r,L}(\pi) \to M_{n,L}.
\end{equation}
Moreover, by \cite[Proposition 3.3]{NR}, the restriction of (\ref{eqn19}) to the Zariski dense open subset $M_{r,L}(\pi)^\circ \subset M_{r,L}(\pi)$ formed by points not fixed by any element of $G_\pi$ is a free $G_\pi$-quotient with image lying in $M^\gamma_{n,L}$. In conclusion, we obtain 
\[
q_M: M_{r,L}(\pi) \to M^\gamma_{n,L} \subset M_{n,L}.
\]
which completes the diagram (\ref{diagram111}).

\begin{rmk}\label{rmk1}
When $\mathrm{gcd}(n,d)=1$ so that there is no strictly semistable objects, both varieties $M_{r,L}(\pi)$ and $M^\gamma_{n,L}$ are nonsingular, and the map $q_M$ induced by $\pi_*$ is a \emph{free} $G_\pi$-quotient \cite[Proposition 7.1]{HT}. However, this may fail when $\mathrm{gcd}(n,d)\neq 1$. For example, the rank 1 stable Higgs bundle $(\CO_{C'}, 0)$ is a $G_\pi$-fixed point.
\end{rmk}

\begin{lem}\label{lem3.4}
We have a splitting
\[
{Rq_M}_* \mathrm{IC}_{M_{r,L}(\pi)} = \mathrm{IC}_{M^\gamma_{n,L}} \oplus \cdots.
\]
\end{lem}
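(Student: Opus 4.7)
The approach is to apply the BBD decomposition theorem to the proper morphism $q_M$ and extract $\mathrm{IC}_{M^\gamma_{n,L}}$ as a summand by analyzing the generic behavior of $q_M$. First I would verify that $q_M$ is proper: both $M_{r,L}(\pi)$ and $M^\gamma_{n,L} \subset M_{n,L}$ are proper over their respective Hitchin bases $A(\pi)$ and $A_\gamma$, and the diagram (\ref{diagram111}) commutes with bottom horizontal $q_A : A(\pi) \to A_\gamma$ the finite $G_\pi$-quotient, so $q_M$ is proper. By the decomposition theorem applied to this proper map and the semisimple perverse complex $\mathrm{IC}_{M_{r,L}(\pi)}$, we obtain a splitting
\[
Rq_{M*}\mathrm{IC}_{M_{r,L}(\pi)} \simeq \bigoplus_\alpha \mathrm{IC}_{Z_\alpha}(\mathcal{L}_\alpha)[r_\alpha].
\]

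The next step is to pin down the generic behavior of $q_M$. By \cite[Proposition 3.3]{NR} as cited in the excerpt, the restriction of $q_M$ to the dense open $M_{r,L}(\pi)^\circ$ of points without nontrivial $G_\pi$-stabilizer is a free $G_\pi$-quotient onto its image in $M^\gamma_{n,L}$; in particular $\dim M_{r,L}(\pi) = \dim M^\gamma_{n,L}$. I would pick a dense open $U \subset M^\gamma_{n,L}$ contained in the smooth locus of $M^\gamma_{n,L}$, lying in the image of $M_{r,L}(\pi)^\circ$, and with $q_M^{-1}(U) \subset M_{r,L}(\pi)^\circ$ lying inside the smooth locus of $M_{r,L}(\pi)$. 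Then $q_M^{-1}(U) \to U$ is an étale $G_\pi$-torsor, so over $U$ we compute
\[
(Rq_{M*}\mathrm{IC}_{M_{r,L}(\pi)})|_U \simeq (q_M|_U)_*\mathbb{C}_{q_M^{-1}(U)}[\dim M^\gamma_{n,L}].
\]
The local system $(q_M|_U)_*\mathbb{C}$ splits into $G_\pi$-isotypic components and contains the invariants $\mathbb{C}_U$ as a direct summand, so $\mathbb{C}_U[\dim M^\gamma_{n,L}] = \mathrm{IC}_{M^\gamma_{n,L}}|_U$ appears as a summand of $(Rq_{M*}\mathrm{IC}_{M_{r,L}(\pi)})|_U$.

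Restricting the BBD decomposition to $U$ and matching with the summand just found, at least one of the triples must be $(Z_\alpha, \mathcal{L}_\alpha, r_\alpha) = (M^\gamma_{n,L}, \mathbb{C}, 0)$, since $\mathrm{IC}_{M^\gamma_{n,L}}$ is the unique simple perverse sheaf whose restriction to $U$ is $\mathbb{C}_U[\dim M^\gamma_{n,L}]$. This yields $\mathrm{IC}_{M^\gamma_{n,L}}$ as a direct summand of $Rq_{M*}\mathrm{IC}_{M_{r,L}(\pi)}$ as claimed.

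The main obstacle I anticipate lies in handling the possibility that $M_{r,L}(\pi)$ is reducible or singular in the non-coprime setting, as flagged in Remark \ref{rmk1} and by the reference to the ``$m$ components of $M_{r,L}(\pi)$'' at the start of Section 4. In particular one needs (i) that the image of the dense open $M_{r,L}(\pi)^\circ$ is in fact dense in $M^\gamma_{n,L}$ (so that an open $U$ as above exists), and (ii) that $U$ can be chosen inside the smooth loci of both sides. These should reduce to standard generic checks once one understands how $G_\pi$ acts on the components of $M_{r,L}(\pi)$: if $G_\pi$ permutes the $m$ components transitively then each component maps birationally onto $M^\gamma_{n,L}$, while if the action preserves components then each component is generically a $G_\pi$-torsor over its image. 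In either case the étale torsor structure holds on a dense open, and the argument above extracts the desired summand.
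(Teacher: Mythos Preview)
Your proposal is correct and follows essentially the same approach as the paper: both arguments restrict to a dense open where $q_M$ is a free $G_\pi$-quotient, split off the $G_\pi$-invariant summand $\mathbb{C}$ of the pushforward there, and conclude via the decomposition theorem/intermediate extension that $\mathrm{IC}_{M^\gamma_{n,L}}$ appears as a direct summand. The paper's proof is simply a terser version of what you wrote, and your additional care about properness of $q_M$ and the existence of a suitable open $U$ fills in details the paper leaves implicit.
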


\begin{proof}
Over an open subset of $M^\gamma_{n,L}$ where $q_M$ is a free $G_\pi$-quotient, we have the canonical splitting
\[
{Rq_M}_* \BC = \left({Rq_M}_* \BC\right)^{G_\pi} \oplus \left({Rq_M}_* \BC\right)_{\mathrm{var}} = \BC \oplus \left({Rq_M}_* \BC\right)_{\mathrm{var}}
\]
with $\left({Rq_M}_* \BC\right)_{\mathrm{var}}$ the variant part. The lemma follows.
\end{proof}

To analyze the supports for $h_\gamma: M^\gamma_{n,L} \to A_\gamma$, we note the following standard lemma.

\begin{lem}\label{lem3.5}
Let $f: X \to Y$ be a finite surjective map between irreducible varieties. Then for any semisimple perverse sheaf $\mathrm{IC}_X(\CL)$ with full support $X$, the pushforward $f_*\mathrm{IC}_X(\CL)$ is s semisimple perverse sheaf with full support $Y$.
\end{lem}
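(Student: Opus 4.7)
The plan is to establish three properties of $f_*\mathrm{IC}_X(\CL)$ separately: perversity, semisimplicity, and the fact that $Y$ itself appears as one of the supports in its canonical decomposition into simple perverse sheaves.

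First, I use that $f$ is finite, hence both proper and affine, so that $f_* = f_!$ is $t$-exact for the perverse $t$-structure. This immediately gives that $f_*\mathrm{IC}_X(\CL)$ is a perverse sheaf on $Y$. Next, since $f$ is proper and $\mathrm{IC}_X(\CL)$ is simple perverse and of geometric/Hodge-theoretic origin, the BBD decomposition theorem \cite{BBD} provides a canonical splitting
\[
f_*\mathrm{IC}_X(\CL) \;\simeq\; \bigoplus_j \mathrm{IC}_{Z_j}(\CM_j),
\]
indexed by irreducible closed subvarieties $Z_j \subseteq Y$ together with simple local systems $\CM_j$ on an open of $Z_j$.

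To show that some $Z_j$ equals $Y$, I would invoke generic \'etaleness: since we work in characteristic zero, there is a nonempty Zariski open $V \subseteq Y$ over which $f$ is finite \'etale. After further shrinking $V$, I may assume that $f^{-1}(V)$ lies in the smooth locus of $X$ on which $\CL$ is a local system, so that $\mathrm{IC}_X(\CL)|_{f^{-1}(V)} \simeq \CL[\dim X]$. Then $f_*\mathrm{IC}_X(\CL)|_V$ is a shift of the \'etale pushforward of the nonzero local system $\CL$, hence is itself a nonzero local system on $V$ placed in the correct perverse degree. This forces at least one summand $\mathrm{IC}_{Z_j}(\CM_j)$ to satisfy $Z_j = Y$, so $Y$ appears as a support of $f_*\mathrm{IC}_X(\CL)$.

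The only subtle point is the applicability of the decomposition theorem to $f_*\mathrm{IC}_X(\CL)$. In the paper's setting this is automatic because the IC sheaves in question carry a natural mixed Hodge module structure, making $f_*\mathrm{IC}_X(\CL)$ pure and hence semisimple; for a fully general $\CL$ one can instead appeal to the Mochizuki--Sabbah extension of the decomposition theorem to polarizable objects. Everything else is formal, relying only on $t$-exactness of $f_*$ under finiteness and on generic \'etaleness of finite maps in characteristic zero.
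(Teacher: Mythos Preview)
Your argument correctly establishes perversity (via $t$-exactness of finite pushforward) and, modulo the caveats you mention, semisimplicity. The gap is in the meaning of ``full support $Y$'': in this paper it means that \emph{every} simple summand of $f_*\mathrm{IC}_X(\CL)$ has support equal to $Y$, and this is exactly how the lemma is applied in Proposition~\ref{prop3.6} (a direct summand of an object with full support must again have full support). You only show that $Y$ appears as \emph{some} support. Your generic-\'etaleness argument rules out summands $\mathrm{IC}_{Z_j}(\CM_j)$ with $Z_j \subsetneq Y$ and $Z_j \cap V \neq \emptyset$, but it does not exclude summands with $Z_j$ contained entirely in $Y \setminus V$, for instance in the branch locus of $f$.

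The paper avoids both this issue and the appeal to the decomposition theorem by a more elementary route: it verifies directly the support and cosupport conditions characterizing intermediate extensions (as in \cite[Section 2.1 (12),(13)]{dCM1}). Since $f$ is finite one has $\CH^{-i}(f_*\mathrm{IC}_X(\CL)) = f_*\CH^{-i}(\mathrm{IC}_X(\CL))$, and finiteness of $f$ preserves the bound $\dim \mathrm{supp}\,\CH^{-i}(-) < i$ for $i < \dim Y = \dim X$; the dual conditions follow from $f_! = f_*$. Hence $f_*\mathrm{IC}_X(\CL)$ is itself of the form $\mathrm{IC}_Y(\CL')$, which has full support by construction. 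Your proof can be completed by inserting precisely this support-condition check, but once that is done the generic-\'etaleness step is no longer needed.
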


\begin{proof}
To show that $f_*\mathrm{IC}_X(\CL)$ is an intermediate extension of a local system on an open subset of $Y$, it suffices to prove the support condition (see \cite[Section 2.1 (12),(13)]{dCM1}):
\[
\mathrm{dim}\left(\mathrm{supp}(\CH^{-i}(-)\right) <i, \quad \quad \textup{for } i<\mathrm{dim}Y
\]
for $f_*\mathrm{IC}_X(\CL)$ and its dual. This follows from the finiteness of $f$ and the same support conditions for $\mathrm{IC}_X(\CL)$ and its dual on $X$. 
\end{proof}

\begin{prop}\label{prop3.6}
Assume that $\gamma\in \Gamma$ and $\kappa \in \hat{\Gamma}$ are matched via the Weil pairing (\ref{Weil_Pairing}), and $\kappa' \in \langle \kappa \rangle$. The object 
\begin{equation}\label{ttt}
\left({Rh_\gamma}_* \mathrm{IC}_{M^\gamma_{n,L}}\right)_{\kappa'}
\end{equation}
has full support $A_\gamma$.
\end{prop}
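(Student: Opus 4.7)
The plan is to combine the support theorem (Theorem~\ref{thm1.1}) for the endoscopic Hitchin fibration $h_\pi$ with the diagram (\ref{diagram111}) to constrain the possible supports of the $\kappa'$-isotypic piece, and then to invoke Ng\^o's calculation over the elliptic locus to verify that $A_\gamma$ itself actually occurs as a support.

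First I would transfer the support restriction across the diagram (\ref{diagram111}). Lemma~\ref{lem3.4} exhibits $\mathrm{IC}_{M^\gamma_{n,L}}$ as a direct summand of $Rq_{M*}\mathrm{IC}_{M_{r,L}(\pi)}$, so applying $Rh_{\gamma*}$ and using $h_\gamma \circ q_M = q_A \circ h_\pi$ presents $Rh_{\gamma*}\mathrm{IC}_{M^\gamma_{n,L}}$ as a direct summand of $Rq_{A*}Rh_{\pi*}\mathrm{IC}_{M_{r,L}(\pi)}$. Theorem~\ref{thm1.1} asserts that every support of $Rh_{\pi*}\mathrm{IC}_{M_{r,L}(\pi)}$ has generic point in $A^{\mathrm{ell}}(\pi)$, and Lemma~\ref{lem3.5} applied to the finite $G_\pi$-quotient $q_A$ sends each such support to its image in $A_\gamma$ while preserving the intermediate-extension structure. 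Since $q_A$ maps $A^{\mathrm{ell}}(\pi)$ into $A_\gamma^{\mathrm{ell}} = A_\gamma \cap A^{\mathrm{ell}}$, every support of $Rh_{\gamma*}\mathrm{IC}_{M^\gamma_{n,L}}$ and therefore of its $\kappa'$-isotypic direct summand is an irreducible closed subvariety of $A_\gamma$ whose generic point lies in $A_\gamma^{\mathrm{ell}}$.

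Next I would verify that $A_\gamma$ itself occurs as a support of $(Rh_{\gamma*}\mathrm{IC}_{M^\gamma_{n,L}})_{\kappa'}$ by producing a nonzero constituent over the dense open locus $A_\gamma^{\mathrm{ell}} \subset A_\gamma$. Over this locus the moduli space $M^\gamma_{n,L}$ is smooth and contained in the stable locus, so for $a \in A_\gamma^{\mathrm{ell}}$ the fiber $h_\gamma^{-1}(a)$ is the $\gamma$-fixed part of the compactified Prym of the integral spectral curve $C_a$. The $\Gamma$-action on its cohomology decomposes through the subgroup $\langle \kappa \rangle \subset \hat{\Gamma}$ dictated by the Weil pairing, and every character in $\langle \kappa \rangle$ appears with nonzero multiplicity by Ng\^o's elliptic endoscopic calculation (see \cite[Section 7.8]{Ngo} and \cite{dC_SL}). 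Hence $(Rh_{\gamma*}\mathrm{IC}_{M^\gamma_{n,L}})_{\kappa'}|_{A_\gamma^{\mathrm{ell}}}$ is a nonzero semisimple complex, whose intermediate extension contributes an $\mathrm{IC}$-summand of full support $A_\gamma$. Combined with the previous step this proves the proposition.

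I expect the main obstacle to be the second step: the elliptic-locus nonvanishing of the $\kappa'$-part is classical in the coprime case but must now be argued for the possibly singular $M^\gamma_{n,L}$. The way around this is that the required nonvanishing only needs to be checked over a further Zariski open subset of $A_\gamma^{\mathrm{ell}}$. On this locus $q_M: M_{r,L}(\pi) \to M^\gamma_{n,L}$ restricts to a free \'etale $G_\pi$-cover from the open subvariety $M_{r,L}(\pi)^\circ$ of Remark~\ref{rmk1}, which reduces the $\kappa'$-part nonvanishing on $M^\gamma_{n,L}$ to the corresponding statement on $M_{r,L}(\pi)$; the latter is accessible through Ng\^o's original smooth-case computation on the relevant Prym cohomology.
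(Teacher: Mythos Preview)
Your two steps show that every support of $\bigl({Rh_\gamma}_* \mathrm{IC}_{M^\gamma_{n,L}}\bigr)_{\kappa'}$ has generic point in $A_\gamma^{\mathrm{ell}}$, and that $A_\gamma$ itself occurs among these supports. But ``full support $A_\gamma$'' means that $A_\gamma$ is the \emph{only} support, and your argument does not exclude proper closed subvarieties $Z\subsetneq A_\gamma$ whose generic point happens to lie in $A_\gamma^{\mathrm{ell}}$. Theorem~\ref{thm1.1} only says that the generic points of supports are in the elliptic locus; it does not force the supports to be all of $A(\pi)$. Your Step~2 extracts from Ng\^o's elliptic calculation only a \emph{nonvanishing} statement for the $\kappa'$-isotypic part, which is strictly weaker than what is needed.

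The paper closes this gap by working on $A(\pi)$ rather than on $A_\gamma$. It first proves that the object $\bigl({Rh_\pi}_*\mathrm{IC}_{M_{r,L}(\pi)}\bigr)_{\kappa'}$ has \emph{full} support $A(\pi)$: over $A^{\mathrm{ell}}(\pi)$ the ambient space $M_{r,L}(\pi)$ is nonsingular, so one is in the setting of \cite[Theorem~2.3(b), Proposition~2.10]{MS}, where Ng\^o's original full-support theorem (which needs $\delta$-regularity, not merely nonvanishing) applies and rules out any proper support; combined with Theorem~\ref{thm1.1} this gives full support on all of $A(\pi)$. Then Lemma~\ref{lem3.5} pushes this forward along the finite map $q_A$ to get full support $A_\gamma$, and Lemma~\ref{lem3.4} identifies $\bigl({Rh_\gamma}_* \mathrm{IC}_{M^\gamma_{n,L}}\bigr)_{\kappa'}$ as a direct summand, so its supports are among those of the pushforward---namely only $A_\gamma$. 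Your approach can be salvaged by replacing the nonvanishing argument in Step~2 with an appeal to the full-support theorem over the elliptic locus (either for $h_\pi$ on $A^{\mathrm{ell}}(\pi)$ as the paper does, or directly for $h_\gamma$ on $A_\gamma^{\mathrm{ell}}$), but as written this key ingredient is missing.
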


\begin{proof}
We first consider the map $h_\pi: M_{r,L}(\pi) \to A(\pi)$ and observe that the object
\begin{equation}\label{eqn20}
\left({Rh_\pi}_* \mathrm{IC}_{M_{r,L}(\pi)}\right)_{\kappa'}
\end{equation}
has full support $A(\pi)$ for $\gamma$ and $\kappa'$ as in the assumption and $\pi: C' \to C$ given by $\gamma$. When $\mathrm{gcd}(n,d) =1$ this is verified in \cite[Theorem 2.3 (b) and Proposition 2.10]{MS}, which relies on the support theorem (\cite[Theorem 2.3 (a)]{MS}) and a direct calculation over the elliptic locus. Since the moduli space $M_{r,L}(\pi)$ is nonsingular restricting over the elliptic locus and the calculation of \cite{MS} over the elliptic locus does not rely on the coprime assumption, we obtain that the full support property still holds for (\ref{eqn20}) as a consequence of Theorem \ref{thm1.1}.

To prove the proposition, we use the commutative diagram (\ref{diagram111}) which induces a canonical $\Gamma$-equivariant isomorphism 
\[
{Rq_A}_*{Rh_\pi}_* \mathrm{IC}_{M_{r,L}(\pi)} = {Rh_\gamma}_*{Rq_M}_* \mathrm{IC}_{M_{r,L}(\pi)}.
\]
Taking the $\kappa'$-isotypic parts, we get
\begin{equation}\label{eqn21}
{Rq_A}_*\left({Rh_\pi}_* \mathrm{IC}_{M_{r,L}(\pi)}\right)_{\kappa'} = \left({Rh_\gamma}_*{Rq_M}_* \mathrm{IC}_{M_{r,L}(\pi)}\right)_{\kappa'}
\end{equation}
where both sides are semisimple objects due to the decomposition theorem. Since $q_A$ is a finite quotient map and (\ref{eqn20}) has full support $A(\pi)$, the lefthand side (\ref{eqn21}) has full support $A_\gamma$ by Lemma \ref{lem3.5}. Furthermore, Lemma \ref{lem3.4} implies that (\ref{ttt}) is a direct summand component of the righthand side of (\ref{eqn21}). This completes the proof.
\end{proof}

\subsection{Proof of Theorem \ref{thm0.2} (a)}
Theorem \ref{thm0.2} (a) is an immediate consequence of Propositions \ref{prop3.1} and \ref{prop3.6}.

More precisely, since both sides of (\ref{thm0.2_a}) have $A_\gamma$ as their only supports, it suffices to show the isomorphism over an \emph{arbitrary} open subset of $A_\gamma$ which is proven essentially by \cite[Theorem B]{Yun3}; see also \cite[Theorem 3.2]{MS} . We note that in the proof of \cite[Theorem 3.2]{MS} the parity assumption on $\mathrm{deg}(D)$ is needed in order to apply the endoscopic correspondence. \qed

\begin{rmk}
In fact, even without the coprime assumption, the proof of \cite[Theorem 3.2]{MS} works over the elliptic locus $A^\mathrm{ell}_\gamma \subset A_\gamma$. In particular, we may choose the open subset in the proof above to be the elliptic locus.
\end{rmk}

\subsection{Proof of Theorem \ref{thm0.2} (b)}\label{Sec3.5}



Since the object (\ref{ttt}) has full support $A_\gamma$, its isomorphism class is determined by the restriction over a Zariski open subset. In view of the diagram (\ref{diagram111}), it suffices to treat the $G_\pi$-equivariant objects
\begin{equation}\label{eqn23}
\left({Rh_\pi}_*\BC_{h_\pi^{-1}(V)}\right)_{\kappa'}
\end{equation}
over an arbitrary Zariski open $V \subset A(\pi)$. After shrinking $V$, we may assume that all the fibers of $h_\pi$ are nonsingular and $G_\pi$ acts freely on $V$. By \cite[Proposition 7.2.3]{Ngo} (see \cite[Theorem 5.0.2]{dCRS} for the Hodge module version), the isomorphism class of the object (\ref{eqn23}) is completely determined by the $G_\pi$-equivariant local system given by the relative top degree cohomology:
\[
\left({R^{2s}h_\pi}_*\BC_{h_\pi^{-1}(V)}\right)_{\kappa'}.
\]
Here $s$ is the dimension of a fiber of $h_\pi$ over $V$. The sheaf 
\[
{R^{2s}h_\pi}_*\BC_{h_\pi^{-1}(V)}
\]
is a rank $m$ trivial local system indexed by the $m$ connected components of a general fiber of $h_{\pi}$, which are further identified with the $m$ connected components of the degree $d$ Prym variety 
\[
\mathrm{Prym}^d(C'/C):= \mathrm{Nm}^{-1}(L), \quad \mathrm{Nm}: \mathrm{Pic}^d(C') \to \mathrm{Pic}^d(C) 
\]
associated with the cyclic Galois cover $\pi: C' \to C$; see \cite[Section 1]{MS}.

In conclusion, the isomorphism class of (\ref{eqn23}) is completely determined by the $G_\pi$- and the $\Gamma$-actions on the $m$ connected components of $\mathrm{Prym}^d(C'/C)$. These two actions commute with each other. 

Now we want to connect the Hitchin fibrations \[
h_{\pi,L}: M_{r,L}(\pi) \to A(\pi), \quad\quad h_{\pi,L'}: M_{r,L'}(\pi) \to A(\pi)
\]
where the line bundles $L$ and $L'$ are of degrees $d$ and $d'$ respectively.\footnote{In this section we use $h_{\pi,L}$ to denote the Hitchin fibration $M_{r,L}(\pi) \to A(\pi)$ to indicate its dependence on the line bundle $L$.} 

We first note the following elementary lemma which justifies the condition (\ref{condition}).

\begin{lem}\label{lem3.8}
There is an integer $q$ coprime to $n$ such that 
\[
d = d'q ~~\mathrm{mod}~~n.
\]
\end{lem}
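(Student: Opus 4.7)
The statement is purely number-theoretic, so the plan is to work one prime at a time and combine via the Chinese Remainder Theorem. Write the prime factorization $n = \prod_i p_i^{k_i}$; it suffices to construct an integer $q_i$ modulo $p_i^{k_i}$ with $\gcd(q_i, p_i) = 1$ and $d \equiv d' q_i \pmod{p_i^{k_i}}$ for each $i$, and then lift to a single $q$ modulo $n$ by CRT. The resulting $q$ is automatically coprime to $n$.

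Fix a prime $p$ dividing $n$ with $p^k \| n$, and set $v = v_p(\gcd(d,n)) = v_p(\gcd(d',n))$, which is well-defined by hypothesis. There are two cases. If $v = k$, then both $d$ and $d'$ are divisible by $p^k$, so $d \equiv d' \equiv 0 \pmod{p^k}$ and we may take $q \equiv 1 \pmod{p^k}$. If $v < k$, then $v_p(d) = v_p(d') = v$, so we can write $d = p^v d_1$ and $d' = p^v d_1'$ with $d_1, d_1'$ units modulo $p^{k-v}$. The congruence $d \equiv d' q \pmod{p^k}$ then reduces to $d_1 \equiv d_1' q \pmod{p^{k-v}}$, which has a unique solution $q \equiv d_1 (d_1')^{-1} \pmod{p^{k-v}}$; any lift of this residue to $\BZ/p^k\BZ$ will satisfy $\gcd(q,p)=1$ since $d_1 (d_1')^{-1}$ is a unit modulo $p$.

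Applying CRT to the local solutions constructed above produces a single integer $q$ with $d \equiv d' q \pmod{n}$ and $\gcd(q, p_i) = 1$ for every prime $p_i \mid n$, i.e.\ $\gcd(q,n)=1$, as required.

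The only subtle point is ensuring the $p$-adic matching of $d$ and $d'$ in the case $v<k$, which is exactly where the hypothesis $\gcd(d,n)=\gcd(d',n)$ is used; no deeper input (such as Dirichlet's theorem) is needed.
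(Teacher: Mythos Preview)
Your proof is correct. The paper's argument is more terse and conceptual: it observes that in $\BZ/n\BZ$ the principal ideal $(d)$ coincides with $(\gcd(d,n))=(a)$, and likewise $(d')=(a)$, so $d$ and $d'$ generate the same ideal and therefore differ by a unit of $\BZ/n\BZ$. Your prime-by-prime CRT computation is essentially a direct proof of that last implication (that generators of the same ideal in $\BZ/n\BZ$ are associates), which the paper leaves implicit; so your route is longer but more self-contained, while the paper's is a clean one-liner relying on a standard ring-theoretic fact.
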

\begin{proof}
Assume that 
\[
\mathrm{gcd}(n,d) = \mathrm{gcd}(n,d') = a.
\]
Then both the primary ideals $(d)$ and $(d')$ of $\BZ/n\BZ$ coincide with $(a)$. Hence the generators $d$ and $d'$ differ by a unit of $\BZ/n\BZ$.
\end{proof}

In the following, the integer $q$ will be chosen as in Lemma \ref{lem3.8}. The proof of Theorem \ref{thm0.2} (b) follows from the following two steps.

\subsubsection{Step 1: Connecting $h_{\pi,L'}$ to $h_{\pi,L'^{\otimes q}}$}\label{3.5.1} 
Since the $G_\pi$-equivariant objects (\ref{eqn23}) associated with the Hitchin fibrations $h_{\pi,L'}$ and $h_{\pi,L'^{\otimes q}}$ are completely determined by the $G_\pi$- and the $\Gamma$-actons on the Prym varieties 
\[
\mathrm{Prym}^{d'}(C'/C): = \mathrm{Nm}^{-1}(L')
\]
and
\[
\mathrm{Prym}^{d'q}(C'/C) : = \mathrm{Nm}^{-1}(L'^{\otimes q})
\]
respectively. An identical argument as for \cite[Proposition 2.11]{MS} yields
\begin{equation*}
\left(\mathrm{Rh_{\pi,L'}}_* \BBC_{h_{\pi,L'}^{-1}(V)} \right)_{q\kappa} \simeq \left(\mathrm{Rh_{\pi,L'^{\otimes q}}}_* \BBC_{h^{-1}_{\pi,L'^{\otimes q}}(V)} \right)_{\kappa} \in D_c^b(V).
\end{equation*}
In view of Proposition \ref{prop3.6}, this further implies that 
\begin{equation}\label{extra}
\left({Rh_\gamma}_*\mathrm{IC}_{M^\gamma_{n,L'}}\right)_{q\kappa} \simeq  \left({Rh_\gamma}_* \mathrm{IC}_{M^\gamma_{n,L'^{\otimes q}}}\right)_{\kappa}  \in  D^b_c(A_\gamma).
\end{equation}

\subsubsection{Step 2: Connecting $M^\gamma_{n,L'^{\otimes q}}$ and $M^\gamma_{n,L}$} 
By the choice of $q$ we have
\begin{equation}\label{eqn25}
\mathrm{deg}(L'^{\otimes q}) - \mathrm{deg}(L) = 0 ~~ \mathrm{mod}~~ n.
\end{equation}
Note that for two line bundles $L_1$ and $L_2$ with $L_1 = L_2 \otimes N^{\otimes n}$, there is a natural identification of the moduli spaces 
\begin{equation*}
    M_{n, L_1} \xrightarrow{\simeq} M_{n, L_2}, \quad (\CE, \theta) \mapsto (\CE\otimes N, \theta)
\end{equation*}
compatible with the $\Gamma$-actions and the Hitchin fibrations. Therefore, by (\ref{eqn25}) we have natural isomorphisms
\[
M_{n,L'^{\otimes q}} \xrightarrow{\simeq} M_{n,L},\quad \quad M^\gamma_{n,L'^{\otimes q}} \xrightarrow{\simeq} M^\gamma_{n,L}
\]
which further induce
\begin{equation}\label{extra1}
    \left({Rh_\gamma}_* \mathrm{IC}_{M^\gamma_{n,L'^{\otimes q}}}\right)_{\kappa} \simeq  \left({Rh_\gamma}_* \mathrm{IC}_{M^\gamma_{n,L}}\right)_{\kappa}.
\end{equation}

The proof of Theorem \ref{thm0.2} (b) is completed by combining (\ref{extra}) and (\ref{extra1}).
\qed

\subsection{The Hausel--Thaddeus conjecture}\label{HT_conj}

In this section, we give a few remarks regarding the relation of our result with the Hausel--Thaddeus conjecture.

The original form of the Hausel--Thaddeus conjecture involves 
Higgs bundles of type $\mathrm{SL}_n$ and $\mathrm{PGL}_n$ with $D = K_C$ and in the coprime setting $\mathrm{gcd}(n,d) = 1$. It relates the singular cohomology of $M_{n,L}$ with the stringy cohomology of $[M_{n,L}/\Gamma]$, twisted by a particular gerbe $\alpha$
whose appearance is motivated by SYZ mirror symmetry.  In the coprime setting, as explained in the appendix of \cite{LW}, the $\alpha$-twisted cohomology of the sector 
\[
[M_{n,L}^\gamma/\Gamma], \quad  \gamma \in \Gamma
\]
is equivalent to a certain isotypic component of the singular cohomology of $M_{n,L}^\gamma$. Hence the original Hausel--Thaddeus formulation is implied by the formulation as in Theorem \ref{thm0.2}, after passing to global cohomology. In the non-coprime setting, however, it is not clear to us how to define the corresponding gerbe $\alpha$ on the singular stack $[M_{n,L}/\Gamma]$ and so we do not have a direct definition of the $\alpha$-twisted intersection cohomology.  As a result, the formulation we give here in terms of the endoscopic decomposition seems more natural.

If we consider the case of Higgs bundles with $D= K_C$ but general degree $d$, then our argument no longer applies; contrary to Theorems \ref{0.1} and \ref{thm1.1} the decomposition theorem for the Hitchin fibration have many additional supports outside the elliptic locus (\emph{c.f. \cite{dCHeM}}). When $\mathrm{gcd}(n,d)=1$, we deduce in \cite{MS} the Hausel-Thaddeus conjecture for $D=K_C$ from the cases of $\mathrm{deg}(D)>2g-2$ using vanishing cycle techniques. However, the approach of \cite{MS} cannot be applied directly to deduce Theorem \ref{thm0.2} (as conjectured by Mauri \cite{Mauri} in the degree 0 case) for $D=K_C$ when $\mathrm{gcd(n,d)}\neq 1$. More precisely, the main ingredient of \cite{MS} is Theorem 4.5 \emph{loc. cit}, which relies on the smoothness of the evaluation map of Proposition 4.1 \emph{loc. cit}. The smoothness fails when there are strictly semistable points.


From the perspective of enumerative geometry, another natural option is to work with the cohomology of the so-called BPS sheaf $\phi_{\mathrm{BPS}}$, a perverse sheaf on $M_{n,L}$ defined by Davison--Meinhardt \cite{DM1} and Toda \cite{Toda}.  When $\deg(D) > 2g-2$, the BPS-cohomology coincides with intersection cohomology but for $D=K_C$ these two are different. Note that combining the recent work \cite{KM} and Theorem \ref{thm0.2} may privide a proof of a version of the Hausel--Thaddeus conjecture for the BPS-cohomology for $D = K_C$; the approach of Davison \cite{DD} further suggests a path line to deduce the $D = K_C$ case of Theorem \ref{thm0.2} from the BPS-cohomology.


Finally, it is reasonable to expect Theorem \ref{thm0.2} can be extended to the case of Higgs bundles for a general reductive group $G$ and its Langlands dual $G^\vee$, and we hope to explore this in subsequent work.

\section{Vector bundles and Higgs bundles}

In this section, we discuss the interplay between the moduli of vector bundles and the moduli of Higgs bundles, and complete the proof of Theorem \ref{thm0.3}. As before, we fix a line bundle $L \in \mathrm{Pic}^d(C)$ and an effective divisor $D$ with $\mathrm{deg}(D)$ even and greater than $2g-2$.


\subsection{Moduli spaces $M_{n,L}$ and $N_{n,L}$}
We would like to study the topology of $N_{n,L}$ via the Higgs moduli space $M_{n,L}$. 

We consider the $\BC^*$-action on $M_{n,L}$ by the scaling action on the Higgs field:
\[
\lambda\cdot (\CE, \theta) = (\CE, \lambda \theta), \quad \quad \lambda \in \BC^*.
\]
The $\BC^*$-fixed locus $F \subset M_{n,L}$ can be decomposed as
\[
F = N_{n,L} \sqcup  F'.
\]
Here the first connected component parameterizes (S-equivalence classes of) semistable Higgs bundles with $\theta =0$ which is naturally isomorphic to $N_{n,L}$. The restriction of the $\Gamma$-action on $M_{n,L}$ to $N_{n,L}$ recovers (\ref{action}). 

We apply hyperbolic localization to connect the intersection cohomology of the moduli spaces $M_{n,L}$ and $N_{n,L}$.

\subsection{Hyperbolic Localization}
We consider the following subvarieties of $M_{n,L}$ obtained from the scaling $\BC^*$-action:
\[
M^+: = \{x\in M_{n,L}: \lim_{\lambda \to 0 }\lambda\cdot x \in F\}, \quad  M^-: = \{x\in M_{n,L}: \lim_{\lambda\to \infty}\lambda\cdot x \in F\}.
\]
Let $f^{+}, f^{-}, g^+, g^-$ be the inclusions
\begin{equation}\label{fg}
f^+: F \hookrightarrow M^+, \quad  f^-: F \hookrightarrow M^-, \quad g^+: M^+ \hookrightarrow M_{n,L}, \quad  g^-: M^- \hookrightarrow M_{n,L}.
\end{equation}
Following \cite{Kir2, Braden}, we consider the \emph{hyperbolic localization functor}:
\begin{equation}\label{functor}
(-)^{!*} : D^b_c(M_{n,L}) \rightarrow D^b_c(F), \quad \CK \mapsto (f^+)^*(g^+)^!\CK.
\end{equation}
We obtain from the main theorem of Kirwan \cite{Kir2} that there is an isomorphism
\begin{equation}\label{localization}
\mathrm{IH}^*(M_{n,L}, \BC) \simeq  H^*\left(F,~~ (\mathrm{IC}_{M_{n,L}})^{!*}[-\mathrm{dim}M_{n,L}]\right).
\end{equation}
In fact, Kirwan proved (\ref{localization}) for normal projective varieties with $\BC^*$-actions. In the case of the moduli of Higgs bundles, one may deduce (\ref{localization}) by applying Kirwan's theorem to a compactification $M_{n,L} \subset \overline{M}_{n,L}$ \cite{Compact, Hausel} where the $\BC^*$-action can be lifted, and then restrict the isomorphism (\ref{localization}) for $\overline {M}_{n,L}$ to the open subvariety $M_{n,L}$; see the first paragraph in \cite[Proof of Corollary 1.5]{HP}. 

Concerning the righthand side of (\ref{localization}), Braden showed in \cite{Braden} that there is a splitting
\begin{equation}\label{decomp}
(\mathrm{IC}_{M_{n,L}})^{!*} \simeq \bigoplus_{i} \mathrm{IC}_{Y_i}(\CL_i)[d_i]
\end{equation}
with $Y_i \subset F$ irreducible closed subvarieties, $\CL_i$ local systems on open subsets of $Y_i$, and $d_i \in \BZ$.

Recall the finite group $\Gamma = \mathrm{Pic}^0(C)[n]$. For a $\Gamma$-action on a $\BC$-vector space $V$, we have the canonical decomposition
\[
V = V^{\Gamma} \oplus V_{\mathrm{var}}
\]
with $V^\Gamma$ the $\Gamma$-invariant part and $ V_{\mathrm{var}}$ the variant part. The following proposition concerns the $\Gamma$-actions on the intersection cohomology groups of $M_{n,L}$ and $N_{n,L}$.

\begin{prop} \label{prop4.1}
We have 
\[
\mathrm{dim} \mathrm{IH}^*(N_{n,L}, \BC)_\mathrm{var}  \leq 
\mathrm{dim} \mathrm{IH}^*(M_{n,L}, \BC)_\mathrm{var}.
\]
\end{prop}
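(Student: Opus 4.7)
The plan is to apply hyperbolic localization (\ref{localization}) together with Braden's decomposition (\ref{decomp}) to exhibit $\mathrm{IH}^*(N_{n,L}, \BC)$ as a $\Gamma$-equivariant direct summand of $\mathrm{IH}^*(M_{n,L}, \BC)$. The desired inequality is then immediate from additivity of the variant part under direct summands.

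First, I would observe that the $\Gamma$-action on $M_{n,L}$ commutes with the $\BC^*$-scaling on the Higgs field, since tensoring by a line bundle does not interact with the scalar $\lambda$. Therefore the fixed locus $F = N_{n,L} \sqcup F'$, the attracting variety $M^+$, and the maps $f^+, g^+$ from (\ref{fg}) are all $\Gamma$-equivariant, so the functor (\ref{functor}) carries $\Gamma$-equivariant complexes to $\Gamma$-equivariant complexes, and the isomorphism (\ref{localization}) is $\Gamma$-equivariant. The $\Gamma$-action preserves the connected component $N_{n,L}$ of $F$, giving a $\Gamma$-equivariant splitting
\[
\mathrm{IH}^*(M_{n,L}, \BC) \;\simeq\; H^*\bigl(N_{n,L},\, (\mathrm{IC}_{M_{n,L}})^{!*}|_{N_{n,L}}[-\dim M_{n,L}]\bigr) \;\oplus\; H^*\bigl(F',\, \cdots\bigr).
\]

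Second, I would prove that $\mathrm{IC}_{N_{n,L}}[c]$ appears as a $\Gamma$-equivariant direct summand of $(\mathrm{IC}_{M_{n,L}})^{!*}|_{N_{n,L}}$ for a suitable $c \in \BZ$, with the natural $\Gamma$-equivariant structure induced from the tensor action on $N_{n,L}$. To see this, restrict to the stable locus $N^s_{n,L} \subset N_{n,L}$, which lies inside the nonsingular locus of $M_{n,L}$. \'Etale-locally at a point $\CE \in N^s_{n,L}$, the moduli space $M_{n,L}$ decomposes as a product $V \times W$ of the vector bundle deformation space $V = H^1(C, \CE{nd}_0(\CE))$ and the Higgs field space $W = H^0(C, \CE{nd}_0(\CE) \otimes \CO_C(D))$, with $\BC^*$ fixing $V$ and scaling $W$. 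A direct computation of hyperbolic localization on this vector bundle model shows that $(\mathrm{IC}_{M_{n,L}})^{!*}$ restricted to $N^s_{n,L}$ is the constant sheaf $\underline{\BC}_{N^s_{n,L}}$ up to a shift by $\dim W$. By the semisimplicity of Braden's decomposition (\ref{decomp}), this forces $\mathrm{IC}_{N_{n,L}}[c]$ to occur as a summand with multiplicity one, and the $\Gamma$-equivariant structure on this summand must be the natural one since the tensor action on $N_{n,L}$ preserves the trivial local system.

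Third, passing to global cohomology and using shifts preserve total dimension, I conclude that $\mathrm{IH}^*(N_{n,L}, \BC)$ is a $\Gamma$-equivariant direct summand of $\mathrm{IH}^*(M_{n,L}, \BC)$, so that taking the variant part yields
\[
\dim \mathrm{IH}^*(N_{n,L}, \BC)_\mathrm{var} \;\leq\; \dim \mathrm{IH}^*(M_{n,L}, \BC)_\mathrm{var}.
\]
The main obstacle is the second step: pinning down that $\mathrm{IC}_{N_{n,L}}$ genuinely occurs in Braden's decomposition with the natural $\Gamma$-equivariant structure. This relies on the local product description of $M_{n,L}$ along $N^s_{n,L}$ and the standard computation that hyperbolic localization on a vector bundle returns the IC of the zero section.
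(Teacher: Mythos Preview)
Your proposal is correct and follows essentially the same route as the paper: both arguments use the $\Gamma$-equivariance of hyperbolic localization and identify $\mathrm{IC}_{N_{n,L}}$ as a summand in Braden's decomposition by restricting to the stable locus, then pass to variant parts. The paper's argument for the second step is slightly slicker---it observes that the attracting component over $N_{n,L}^s$ is an \emph{open} subvariety of $M_{n,L}^s$, so $(g^+)^!$ is just restriction there---but your \'etale-local product model reaches the same conclusion.
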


\begin{proof}
We first show that the righthand side of the decomposition (\ref{decomp}) contains \[
\mathrm{IC}_{N_{n,L}}[\mathrm{dim}M_{n,L} - \mathrm{dim}N_{n,L}]
\]
as a direct summand component. Consider the open subvariety $M_{n,L}^{s} \subset M_{n,L}$ formed by stable Higgs bundles. By definition we have $M_{n,L}^s \cap N_{n,L} = N_{n,L}^s$ where $N_{n,L}^s$ is the locus of stable vector bundles. Both $M_{n,L}^s$ and $N_{n,L}^s$ are nonsingular. The component of the attracting locus $(M^s)^+$ over $N_{n,L}^s$ is an open subvariety of $M_{n,L}$, so we have the splitting over the stable locus $M_{n,L}^s$:
\[
(f^+)^*(g^+)^!\BC_{M_{n,L}^s}  \simeq  \BC_{N_{n,L}^s} \oplus \cdots.
\]
In particular, this shows that there is a term in the righthand side of (\ref{decomp}) with 
\[
Y_0 = N_{n,L}, \quad  \CL_0 = \BC, \quad d_0 = \mathrm{dim}M_{n,L} - \mathrm{dim}N_{n,L}.
\]
Hence (\ref{decomp}) induce an isomorphism
\begin{equation}\label{eq5}
\mathrm{IH}^*(M_{n,L}, \BC) \simeq \mathrm{IH}^*(N_{n,L}, \BC) \oplus \left( \bigoplus_{j>0} H^{*-\mathrm{dim}M_{n,L}+d_j}(F,~~\mathrm{IC}_{Y_j}(\CL_j))  \right).
\end{equation}
Since the $\Gamma$- and the $\BC^*$-actions on $M_{n,L}$ commute, the embeddings (\ref{fg}) are $\Gamma$-equivariant. The hyperbolic localization functor (\ref{functor}) and the isomorphisms (\ref{localization}) and (\ref{decomp}) are also $\Gamma$-equivariant. Consequently, (\ref{eq5}) is an $\Gamma$-equivariant isomorphism whose variant parts implies the proposition.
\end{proof}

\subsection{Codimension estimate}
Recall that $d_\gamma$ is the codimension of $A_\gamma$ in $A$. We have
\[
d_\gamma = \mathrm{dim}A - \mathrm{dim}A_\gamma = \mathrm{dim}A - \mathrm{dim}A(\pi)
\]
where $\pi: C' \to C$ is the \'etale Galois cover associated with $\gamma$. By the formulas of \cite[Section 6.1]{dC_SL} for the Hitchin bases, we obtain the following codimension formula for endoscopic loci.

\begin{lem}\label{lem4.2}
Assume that $\gamma \in \Gamma$ has order $m$ with $n = mr$. We have
\[
d_\gamma = \frac{n(n - r) \cdot \mathrm{deg}(D)}{2}.
\]
In particular for fixed rank $n$, we have $\mathrm{min}_{\gamma\neq 0}\{d_\gamma\}  \to +\infty$ when $\mathrm{deg}(D) \to \infty$.
\end{lem}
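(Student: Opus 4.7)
My plan is to compute $\dim A$ and $\dim A(\pi)$ separately via Riemann--Roch on $C$ and $C'$ and take the difference. The hypothesis $\mathrm{deg}(D) > 2g-2$ ensures $H^1(C, \CO_C(iD)) = 0$ for every $i \geq 1$, so each summand $H^0(C, \CO_C(iD))$ of $A$ has dimension $i\,\mathrm{deg}(D) - g + 1$, and summing over $i = 2, \ldots, n$ produces an explicit formula for $\dim A$ that is linear in $\mathrm{deg}(D)$ with a correction term linear in $g-1$ (the $\mathrm{deg}(D)$-coefficient coming out to $(n-1)(n+2)/2$ and the $(g-1)$-coefficient to $-(n-1)$).

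For $\dim A(\pi)$, I would first apply Riemann--Hurwitz to the étale cover $\pi \colon C' \to C$ of degree $m$ to obtain $g' = m(g-1)+1$. Then $\mathrm{deg}(iD') = mi\,\mathrm{deg}(D) > 2g' - 2$, so Riemann--Roch on $C'$ gives $h^0(C', \CO_{C'}(iD')) = mi\,\mathrm{deg}(D) - m(g-1)$ for every $i \geq 1$. Recall that $A(\pi) \subset \widetilde{A}' = \bigoplus_{i=1}^{r} H^0(C', \CO_{C'}(iD'))$ is cut out by a single linear trace condition $\pi_* a_1 = 0 \in H^0(C, \CO_C(D))$ on the degree-one factor. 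The only step with actual content is that this condition has full rank, i.e.\ that the trace map $\pi_* \colon H^0(C', \CO_{C'}(D')) \to H^0(C, \CO_C(D))$ is surjective; I would verify this from the canonical trace-splitting $\pi_* \CO_{C'} \simeq \CO_C \oplus V$ for a finite étale cover in characteristic zero, combined with the vanishing $H^1(C, \CO_C(D)) = 0$. Granting surjectivity, the degree-one piece of $A(\pi)$ contributes $h^0(C', \CO_{C'}(D')) - h^0(C, \CO_C(D))$ and the higher pieces contribute the full $h^0(C', \CO_{C'}(iD'))$.

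The remainder is pure bookkeeping with $n = mr$. The coefficient of $(g-1)$ in $\dim A(\pi)$ collapses to $-(m-1) - (r-1)m = -(mr - 1) = -(n-1)$, which matches the corresponding coefficient in $\dim A$; hence the genus-dependent terms cancel in the difference and $d_\gamma$ is a pure multiple of $\mathrm{deg}(D)$. Simplifying the difference of the $\mathrm{deg}(D)$-coefficients then yields $d_\gamma = n(n-r)\mathrm{deg}(D)/2$. The ``in particular'' statement is immediate: a nontrivial $\gamma$ has order $m \geq 2$, so $r = n/m \leq n/2$ and $n - r \geq n/2$, yielding the uniform lower bound $\min_{\gamma \neq 0} d_\gamma \geq n^2\mathrm{deg}(D)/4$, which tends to $+\infty$ as $\mathrm{deg}(D) \to \infty$. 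The main (modest) obstacle is the trace-surjectivity, which I would state and justify explicitly; everything else is Riemann--Roch plus a short algebraic manipulation.
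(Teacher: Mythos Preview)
Your proposal is correct and follows the standard route: the paper itself does not give a proof but simply cites the Hitchin-base dimension formulas in \cite[Section~6.1]{dC_SL}, and your direct Riemann--Roch computation of $\dim A$ and $\dim A(\pi)$ is precisely what underlies those formulas. One minor remark: once you have the trace-splitting $\pi_*\CO_{C'} \simeq \CO_C \oplus V$, surjectivity of the trace on global sections is immediate from the direct-sum decomposition and does not require the $H^1$-vanishing you mention (though that alternative argument also works).
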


Now we complete the proof of Theorem \ref{thm0.3}.

\subsection{Proof of Theorem \ref{thm0.3}}

For fixed genus $g$ curve $C$ and rank $n$, we work with Higgs bundles with $\mathrm{deg}(D)$ even and large enough, so that $d_\gamma > \mathrm{dim}N_{n,L}$ for any nonzero $\gamma \in \Gamma$. This is possible due to Lemma \ref{lem4.2} and the fact that $\mathrm{dim}N_{n,L} = (n^2-1)(g-1)$ is independent of $\mathrm{deg}(D)$.


Theorem \ref{thm0.2} (a) implies that the variant part 
\begin{equation*}
\left(Rh_* \mathrm{IC}_{M_{n,L}}\right)_{\mathrm{var}} \in D^b_c(A)
\end{equation*}
(contributed by the nontrivial characters) is concentrated in degrees $\geq \mathrm{min}_{\gamma\neq 0}\{2d_\gamma\}$.\footnote{We note that this statement does not need the even assumption of $\mathrm{deg}(D)$.} Taking global cohomology, we have
\[
\mathrm{IH}^k(M_{n,L}, \BC)_{\mathrm{var}} = 0, \quad \quad \forall k < \mathrm{min}_{\gamma\neq 0}\{2d_\gamma\},
\]
which further yields from Proposition \ref{prop4.1} that
\[
\dim \mathrm{IH}^k(N_{n,L}, \BC)_{\mathrm{var}} \leq  \dim \mathrm{IH}^k(M_{n,L}, \BC)_\mathrm{var} =0, \quad \forall k < \mathrm{min}_{\gamma\neq 0}\{2d_\gamma\}.
\]
By our choice of $D$ we conclude that $\mathrm{IH}^*(N_{n,L}, \BC)_{\mathrm{var}} = 0$. This proves the triviality of the $\Gamma$-action on $\mathrm{IH}^k(N_{n,L}, \BC)$.

To prove (\ref{SL_PGL}), we consider the natural finite quotient map 
\[
f: N_{n,L} \to N_{n,L}/\Gamma= \check{N}_{n,L}.
\]
Since the intersection cohomology complex $\mathrm{IC}_{N_{n,L}}$ is naturally $\Gamma$-equivariant, the pushforward complex $f_*\mathrm{IC}_{N_{n,L}}$ admits a canonical decomposition with respect to the $\Gamma$-action:
\[
f_*\mathrm{IC}_{N_{n,L}} = \left(f_*\mathrm{IC}_{N_{n,L}}\right)^\Gamma \oplus \left(f_*\mathrm{IC}_{N_{n,L}}\right)_\mathrm{var}.
\]
By the first part of the theorem, the cohomology of $\left(f_*\mathrm{IC}_{N_{n,L}}\right)_\mathrm{var}$ vanishes. Therefore it suffices to show that the complex $\left(f_*\mathrm{IC}_{N_{n,L}}\right)^\Gamma$ coincides with $\mathrm{IC}_{\check{N}_{n,L}}$, which follows from Lemma \ref{lem3.5}. \qed

\end{document}